\newcommand{\no}[1]{#1}
\renewcommand{\no}[1]{}
\renewcommand{\Delta}{\upDelta}}
\date{\today}
\newtheorem{lemma}{Lemma}[section]
\newtheorem{remark}{Remark}[section]
\newtheorem{corollary}{Corollary}[section]
\newtheorem{proposition}{Proposition}[section]
\newtheorem{theorem}{Theorem}[section]
\def\H{\mathcal{H}}
\newcommand{\be}{\begin{equation}}
\newcommand{\ee}{\end{equation}}
\newcommand{\ba}{\begin{array}}
\newcommand{\ea}{\end{array}}
\newcommand{\bea}{\begin{eqnarray*}}
\newcommand{\eea}{\end{eqnarray*}}
\newcommand{\bean}{\begin{eqnarray}}
\newcommand{\eean}{\end{eqnarray}}
\def\hat{\widehat}
\def\tilde{\widetilde}
\def\u{\mathfrak u}
 \def\f{\mathfrak f}
 \def\H{\mathfrak H}
\def\cydot{\leavevmode\raise.4ex\hbox{.}}
\title[]{identification of an inclusion in multifrequency
electric impedance tomography}
\author{Habib Ammari }
\address{Department of Mathematics, ETH Z\"urich, 
R\"amistrasse 101, 8092 Z\"urich, Switzerland}
\email{habib.ammari@math.ethz.ch}
\author{ Faouzi Triki}
\address{Laboratoire Jean Kuntzmann, Universit\'e 
Grenoble-Alpes \& CNRS, 700 Avenue Centrale,
38401 Saint-Martin-d'H\`eres, France}
\email{ faouzi.triki@imag.fr}
\date{}
\subjclass{Primary: 35R30}
\keywords{Inverse problems,    stability estimates,
electric impedance tomography}
\begin{document}

\begin{abstract}
The multifrequency  electrical impedance tomography is considered  in order to image a conductivity inclusion inside a homogeneous background medium by injecting one current. An original spectral
decomposition of the solution of the forward conductivity problem is
used to retrieve  the Cauchy data corresponding to the extreme case of perfect conductor. 
Using results based on the unique continuation we then prove the  uniqueness  of multifrequency  electrical impedance tomography and obtain rigorous stability  estimates. Our 
results in this paper are quite surprising in inverse conductivity 
problem since in general  infinitely many   {input
    currents}   
are needed in order to  obtain the  uniqueness in the determination
of the conductivity.  
\end{abstract}

\maketitle

\section{The Mathematical Model and main results}
In this section we introduce the mathematical model of
the multifrequency electrical impedance tomography (mfEIT).
Let $\Omega$ be the open bounded  smooth domain in $\mathbb R^d, d=2, 3$, 
occupied by the sample under investigation and
denote by $\partial \Omega$ its boundary. The mfEIT forward problem 
is to determine the potential $u(\cdot, \omega) \in H^1(\Omega)
:=\{ v \in L^2(\Omega): \nabla v \in L^2(\Omega)\}$,
solution to 
\bean\label{maineq}
\left \{
\ba{lllcc}
-\nabla \cdot \left(\sigma(x, \omega) \nabla u(x, \omega)\right) =  0 & \textrm{in}
\quad \Omega, \\
\sigma(x, \omega) \partial_{\nu_\Omega} u(x, \omega)(x) = f(x) & \textrm{on}
\quad \partial \Omega,\\
\int_{\partial \Omega} u(x, \omega) ds = 0,&
\ea
\right.
\eean
where $\omega$ denotes the frequency, $\nu_\Omega(x)$ is the outward 
normal vector
to $\partial \Omega$,  $\sigma(x, \omega)$ is 
the conductivity distribution,
and   $f\in H^{\frac{1}{2}}_\diamond(\partial \Omega) :=\{ g \in 
 H^{\frac{1}{2}}(\partial \Omega): \int_{\partial \Omega} g \, ds =0\}$  is the input current. \\

In this paper we are interested in the case where  the frequency dependent 
conductivity distribution takes the form 
\bean \label{conductivitydistribution}
\sigma(x, \omega) = k_0+ (k(\omega)-k_0) \chi_D(x)
\eean
with $\chi_D(x)$ being the characteristic function of a smooth  inclusion 
$D$ in $\Omega$ ($\overline D\subset \Omega$), {$k(\omega): \mathbb
R_+\rightarrow
\mathbb C\setminus \overline{\mathbb R_-}$,} being a 
continuous complex-valued function, and $k_0$
being a fixed positive constant (the conductivity of the background medium).  \\

The  mfEIT inverse problem is to recover the shape and  the position
of the inclusion  $D$ from measurements of the boundary voltages 
$u(x, \omega)$ on  $
\partial \Omega $ for
$\omega \in (\underline \omega, \overline \omega)$, 
$0\leq \underline \omega < \overline \omega$. It has many important applications in biomedical imaging. Experimental research has found that the conductivity of many biological tissues 
varies strongly with respect to the
frequency within certain frequency ranges \cite{GPG}. In \cite{AGGJS}, using homogenization techniques, the authors analytically exhibit the fundamental mechanisms underlying the fact that effective biological tissue electrical properties and their
frequency dependence reflect the tissue composition and physiology. There have been also several numerical studies on frequency-difference imaging. It was numerically shown that the approach can accommodate geometrical errors, including imperfectly known boundary \cite{alberti,JS,malone}.\\

 \subsection{Main results}
 Now, we  introduce the class of inclusions on
 which  we will  study the uniqueness and stability of the 
mfEIT inverse problem. Without loss of generality we further
assume that $\Omega$ contains the origin. 

 Let $b_1= \textrm{dist}(0, \partial \Omega)$ and let $b_0< b_1$.
 For $\delta>0$ small enough, and $m>0$ large enough,  define 
 the set of inclusions:
\bea
\mathfrak D := \left\{ D :=\{x\in \mathbb R^d: |x|< 
\Upsilon(\hat x), \hat x = \frac{x}{|x|} \};\;  
b_0<\Upsilon(\hat x)< b_1-\delta;\;
\|\Upsilon\|_{C^{2, \varsigma}} \leq m, \; \varsigma >0 \right\}. 
\eea 

Then,  the 
mfEIT inverse problem has  a unique solution within the
class  $\mathfrak D$, and we have the following stability estimates.

 \begin{theorem} \label{mainthm1}
 Let $D$ and $\widetilde D$ be two  inclusions 
in $\mathcal D$.
Denote by $u$ (resp. $\tilde u$) the solution of 
\eqref{maineq} with 
the inclusion
$D$ (resp. $\widetilde D$). Let 
$$\varepsilon = \sup_{x \in \partial \Omega, \omega \in (\underline \omega,
\overline \omega)}|u-\tilde u |.$$ 

Then,  there exist constants $C>0$ and $\tau\in(0,1)$,  such that  the following
estimate holds:
\bean \label{stabilityestimate1}
\left| D\Delta \widetilde D \right| \leq C 
\left(\frac{1}{\ln(\varepsilon^{-1})}\right)^\tau,
\eean
Here, $\Delta$ denotes the symmetric difference and  the constants $C$ and
$\tau $  depend only on 
$f, \Omega, \mathfrak D,$ 
and $\Sigma:= \{k(\omega);  \omega \in (\underline \omega,
\overline \omega) \}$. 

\end{theorem}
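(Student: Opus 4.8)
The plan is to turn the multifrequency data into the single Cauchy datum of a \emph{perfect conductor} problem, and then to invoke the known logarithmic stability for the resulting one–measurement obstacle problem. First I would encode the frequency dependence through the contrast parameter $\mu(\omega)=\frac{k(\omega)+k_0}{2(k(\omega)-k_0)}$ and represent the solution of \eqref{maineq} by layer potentials on $\partial D$: writing $u(\cdot,\omega)=H+\mathcal S_D\phi_\omega$, where $H$ is the frequency–independent background potential determined by $f$, the density solves $(\mu(\omega)I-\mathcal K_D^*)\phi_\omega=\partial_\nu H|_{\partial D}$ with $\mathcal K_D^*$ the Neumann–Poincar\'e operator of $\partial D$. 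Diagonalizing $\mathcal K_D^*$ with eigenpairs $(\lambda_j,\varphi_j)$, $\lambda_j\in[-\tfrac12,\tfrac12]$, produces on $\partial\Omega$ the spectral expansion
\be
u(\cdot,\omega)\big|_{\partial\Omega}=H\big|_{\partial\Omega}+\sum_j\frac{\langle \partial_\nu H,\varphi_j\rangle}{\mu(\omega)-\lambda_j}\,\mathcal S_D\varphi_j\big|_{\partial\Omega}.
\ee
The M\"obius map $k\mapsto\mu$ sends $\mathbb C\setminus\overline{\mathbb R_-}$ onto $\mathbb C\setminus[-\tfrac12,\tfrac12)$, so every admissible $\mu(\omega)$ lies off the spectrum and $\mu\mapsto u(\cdot;\mu)|_{\partial\Omega}$ is analytic on $\mathbb C\setminus[-\tfrac12,\tfrac12)$. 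The perfect conductor $k=\infty$ corresponds to $\mu=\tfrac12$; since $H$ is harmonic across $D$, one has $\int_{\partial D}\partial_\nu H\,ds=0$, which is (up to a constant) the residue of the expansion at the top eigenvalue $\tfrac12$, whose eigenfunction yields a single–layer potential constant on $\partial D$. Hence the expansion has no pole at $\tfrac12$ and extends analytically there, with limiting value the perfect–conductor Dirichlet trace $u_\infty|_{\partial\Omega}$; together with the known Neumann datum $f$ this is the full perfect–conductor Cauchy data.

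Next I would make the evaluation at $\mu=\tfrac12$ quantitative. Because $\tfrac12$ is a simple eigenvalue separated from the rest of the spectrum by a gap $c_0>0$ that is uniform over the compact class $\mathfrak D$ (the $C^{2,\varsigma}$ bound on $\Upsilon$ and the star–shapedness control $\mathcal K_D^*$ and its resolvent), the maps $g=u(\cdot;\mu)|_{\partial\Omega}$ and $\tilde g=\tilde u(\cdot;\mu)|_{\partial\Omega}$ are analytic and uniformly bounded, $\|g\|,\|\tilde g\|\le M$, on the fixed domain $\mathcal O=\mathbb C\setminus[-\tfrac12,\tfrac12-c_0]$, which contains both $\tfrac12$ and the measurement arc $\Sigma'=\{\mu(\omega):\omega\in(\underline\omega,\overline\omega)\}$. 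On $\Sigma'$ we have $|g-\tilde g|\le\varepsilon$ uniformly in the boundary point. Applying the two–constants (harmonic–measure) theorem to $g-\tilde g$ on a subdomain of $\mathcal O$ bounded partly by $\Sigma'$ then yields a H\"older bound
\be
\sup_{\partial\Omega}\big|u_\infty-\tilde u_\infty\big|=\big|(g-\tilde g)(\tfrac12)\big|\le C\,\varepsilon^{\alpha},\qquad \alpha\in(0,1),
\ee
with $\alpha$ a harmonic–measure exponent depending only on $\Sigma$, $\Omega$ and $\mathfrak D$.

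It remains to pass from closeness of the perfect–conductor Cauchy data to the geometric estimate. Here $u_\infty$ (resp.\ $\tilde u_\infty$) is harmonic in $\Omega\setminus\overline D$ (resp.\ $\Omega\setminus\overline{\widetilde D}$), constant on the inclusion boundary, with common Neumann datum $f$ on $\partial\Omega$, and by the previous step the Dirichlet traces differ by $C\varepsilon^{\alpha}$. The difference $w=u_\infty-\tilde u_\infty$ is harmonic in $\Omega\setminus(\overline D\cup\overline{\widetilde D})$ with Cauchy data of size $C\varepsilon^{\alpha}$ on $\partial\Omega$, so quantitative unique continuation (three–spheres and doubling inequalities, with constants uniform over $\mathfrak D$ by the bounds $b_0<\Upsilon<b_1-\delta$ and $\|\Upsilon\|_{C^{2,\varsigma}}\le m$) propagates this smallness to neighbourhoods of $\partial D$ and $\partial\widetilde D$. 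Combining this with the overdetermination that $u_\infty$ is constant on $\partial D$ while $\tilde u_\infty$ need not be, one obtains in the standard manner for the single–measurement perfect–conductor problem a logarithmic modulus of continuity $|D\Delta\widetilde D|\le C\big(\ln((\varepsilon^{\alpha})^{-1})\big)^{-\tau}$; since $\ln((\varepsilon^{\alpha})^{-1})=\alpha\ln(\varepsilon^{-1})$, absorbing $\alpha^{-\tau}$ into $C$ gives exactly \eqref{stabilityestimate1}.

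I expect the middle step to be the main obstacle: controlling the analytic continuation of $g-\tilde g$ to $\mu=\tfrac12$ without precise knowledge of the poles $\lambda_j$, which differ for $D$ and $\widetilde D$. The resolution is structural — all poles lie on the fixed segment $[-\tfrac12,\tfrac12)$, so $g-\tilde g$ is analytic on the common, $D$–independent domain $\mathcal O$; the class $\mathfrak D$ supplies the uniform bound $M$ and the uniform spectral gap $c_0$ needed for the two–constants theorem, and the vanishing residue at $\tfrac12$ is precisely what makes the target value finite. The final passage from boundary smallness of $w$ to the volume of $D\Delta\widetilde D$ is delicate but follows the established quantitative–unique–continuation machinery for a single boundary measurement.
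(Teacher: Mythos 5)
Your proposal is correct and follows essentially the same route as the paper: a spectral/meromorphic decomposition of the boundary data in the contrast parameter (the paper's variational Poincar\'e operator playing the role of your Neumann--Poincar\'e operator, and your uniform spectral gap at $\tfrac12$ being exactly the content of Lemma \ref{eigenvalueslowerbound}), then the two-constants/harmonic-measure theorem to recover the perfect-conductor Cauchy data with a H\"older loss (the paper extracts $u_0$ via a Cauchy contour integral around the uniformly localized poles rather than by point evaluation at $\mu=\tfrac12$, but the mechanism is identical), and finally quantitative unique continuation plus the constancy of $u_0$ inside $D$ to pass to $|D\Delta\widetilde D|$. The only remark is that your last step is left as ``standard machinery,'' whereas the paper carries it out explicitly through Lemmas \ref{firstCauchy}--\ref{differencediameter} and Proposition \ref{doublinginequality}.
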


\begin{theorem} \label{mainthm2} Assume that $d=2$, and
let $D$ and $\widetilde D$ be two analytic  inclusions 
in $\mathfrak D$.
Denote by $u$ (resp. $\tilde u$) the solution of \eqref{maineq} with 
the inclusion
$D$ (resp. $\widetilde D$). Let 
$$\varepsilon = \sup_{x \in \partial \Omega, \omega \in (\underline \omega,
\overline \omega)}|u-\tilde u |.$$ 
Then,  there exist constants $C>0$ and $\tau^\prime \in (0,1)$,  such 
that  the following
estimate
\bean \label{stabilityestimate1b}
\left| D\Delta \widetilde D \right| \leq C \varepsilon^{\tau^\prime},
\eean
holds. Here the constants $C$ and $\tau^\prime$ depend only on 
$f, \Omega, \mathfrak D$,  
and $\Sigma$. 

\end{theorem}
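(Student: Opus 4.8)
The plan is to keep the architecture of the proof of Theorem~\ref{mainthm1} and to upgrade only the quantitative unique continuation step, which is where the logarithmic rate is produced, to a H\"older rate by exploiting the two-dimensional holomorphic structure together with the analyticity of the inclusions. The overall strategy has three stages: reduce the multifrequency data to the perfect-conductor Cauchy data, propagate the smallness of the potential difference inward, and convert that into a bound on $|D\Delta\widetilde D|$.

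First I would recall the reduction to the perfect conductor problem. The map $\omega\mapsto u(\cdot,\omega)|_{\partial\Omega}$ factors through $k(\omega)$ and, via the spectral decomposition of the forward solution, is meromorphic in the contrast variable $\lambda(\omega)=\frac12\frac{k(\omega)+k_0}{k(\omega)-k_0}$, with poles at the Neumann--Poincar\'e eigenvalues of $D$ in $(-\tfrac12,\tfrac12)$. Knowing the boundary data over the frequency band, i.e.\ over the curve $\Sigma$ in $\mathbb C$, determines the residues and hence the perfect-conductor limit $\lambda\to\tfrac12$. This yields the Dirichlet trace $w|_{\partial\Omega}$ of the potential $w$ that is harmonic in $\Omega\setminus\overline D$, constant on $\partial D$, with Neumann data $f$ on $\partial\Omega$ and $\int_{\partial D}\partial_\nu w=0$, together with an estimate $\sup_{\partial\Omega}|w-\widetilde w|\le C\varepsilon^{\gamma}$. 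This step is common to both theorems and is H\"older-stable, so it does not degrade the final rate; the whole gain in Theorem~\ref{mainthm2} comes from the next stage.

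Next, $v:=w-\widetilde w$ is harmonic in $\Omega\setminus(\overline D\cup\overline{\widetilde D})$ with small Cauchy data on $\partial\Omega$, and I would propagate this smallness to a fixed interior collar $U$ around $\partial D\cup\partial\widetilde D$. This is the crux. For merely $C^{2,\varsigma}$ inclusions in $d=2,3$ one iterates the three-spheres inequality along a chain of balls reaching the inclusion, and the compounding of exponents less than one over shrinking radii forces the logarithmic modulus of Theorem~\ref{mainthm1}. In the present analytic, two-dimensional setting I would instead use that $v=\mathrm{Re}\,F$ for a holomorphic $F$ that, by the analyticity of $\partial D$ and $\partial\widetilde D$, extends across these boundaries into a complex neighborhood of size controlled by $\mathfrak D$. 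A Hadamard three-circle / two-constants argument applied to $F$ in a fixed-size annulus then propagates smallness in a \emph{single} step with a power modulus rather than a logarithmic one, giving $\|v\|_{L^\infty(U)}\le C\varepsilon^{\tau'}$.

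Finally I would convert this into the geometric estimate. Since the domains are star-shaped, $|D\Delta\widetilde D|\approx\|\Upsilon-\widetilde\Upsilon\|_{L^1(\mathbb S^1)}$, so it suffices to bound the difference of the radial functions. Using that $w$ is constant on $\partial D$ and $\widetilde w$ is constant on $\partial\widetilde D$, together with a uniform Hopf-type lower bound $|\nabla w|\ge c>0$ near $\partial D$ over the class $\mathfrak D$, the smallness of $v$ on the symmetric-difference region forces $\|\Upsilon-\widetilde\Upsilon\|_{L^\infty(\mathbb S^1)}$, and hence $|D\Delta\widetilde D|$, to be controlled by $\|v\|_{L^\infty(U)}$; chaining the estimates yields \eqref{stabilityestimate1b}. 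I expect the main obstacle to be the middle stage: making the holomorphic extension across the analytic boundaries quantitative and \emph{uniform} over $\mathfrak D$ (a complex neighborhood of fixed width with controlled extension bounds), so that the three-circle inequality produces a genuine exponent $\tau'$ depending only on $f,\Omega,\mathfrak D,\Sigma$, and coupling it cleanly with the non-degeneracy of $\nabla w$ near the possibly close, but distinct, analytic boundaries $\partial D$ and $\partial\widetilde D$.
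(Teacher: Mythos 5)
Your first two stages track the paper closely: the spectral/meromorphic reduction to the perfect-conductor data $u_0$ with a H\"older rate is exactly Corollary~\ref{estimationu0}, and locating the log-to-H\"older upgrade in the Cauchy-propagation step for analytic two-dimensional boundaries is exactly what happens in Lemma~\ref{firstCauchy} (the paper cites \cite{BCY1} for the H\"older estimate on analytic curves, which rests on the same reflection-across-analytic-level-curves mechanism you describe). The genuine gap is in your third stage. You invoke ``a uniform Hopf-type lower bound $|\nabla w|\ge c>0$ near $\partial D$ over the class $\mathfrak D$'' to convert smallness of $u_0-\tilde u_0$ on the symmetric-difference region into smallness of $\Upsilon-\widetilde\Upsilon$. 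No such bound holds: $u_0$ is constant on $\partial D$, so its tangential gradient vanishes identically there, and since $\int_{\partial D}\partial_{\nu_D}u_0|^+\,ds=0$ (Green's formula in $\Omega\setminus\overline D$, as used in the proof of Lemma~\ref{secondCauchy}) the normal derivative changes sign and therefore vanishes at some points of $\partial D$. Hence $\nabla u_0$ necessarily has zeros on $\partial D$, and near those points $u_0-\varrho$ vanishes to higher order, so the linear lower bound $|u_0-\varrho|\gtrsim\mathrm{dist}(x,\partial D)$ that your argument needs fails, uniformly over $\mathfrak D$ or not.

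The paper's remedy is precisely designed around this degeneracy: instead of a pointwise gradient bound it uses the boundary doubling inequality of \cite{AdolfssonEscauriaza} (Proposition~\ref{doublinginequality}, with constants made uniform over the class as in \cite{ABRV}) to control the maximal order of vanishing of $u_0-\varrho$ at boundary points, combines it with the maximum-principle comparison of the interior constants $\varrho,\tilde\varrho$ (Lemmas~\ref{secondCauchy} and~\ref{thirdCauchy}) and the elementary geometric Lemma~\ref{differencediameter} relating $|D\Delta\widetilde D|$ to the radius $r_0$ of a ball inscribed in the symmetric difference. This chain is power-type in the input, so it preserves the H\"older rate coming from Lemma~\ref{firstCauchy}; note also that it only yields a bound on the measure $|D\Delta\widetilde D|$, not on $\|\Upsilon-\widetilde\Upsilon\|_{L^\infty}$, which is consistent with the degeneracy just described. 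To repair your proof you should replace the Hopf step by this doubling-inequality argument (or an equivalent quantitative vanishing-order estimate); as written, the final stage does not go through.
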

  
{
These results  show that the reconstruction 
of the inclusion from multi-frequency boundary voltage data
is improving according to the regularity of the boundary
of the inclusion. Precisely, the stability  estimates 
vary from  logarithmic  to H\"older. They can also be extended to 
a larger class of inclusions as non-star shaped domains, and
to measurements on only a small part of the boundary. In this
paper for the sake of simplicity we do not handle such general 
cases.  } \\

{
The rest of the paper is organized as follows. In section 2, we
introduce the variational  Poincar\'e operator. We study 
in section 3  the regularity of the potential $u(x, \omega)$ 
as a function of the frequency function $k(\omega)$. Precisely, using 
a spectral decomposition based on  the eigenfunctions
of the variational  Poincar\'e operator, we split the
potential $u(x, \omega)$ into a frequency part $u_f(x, k(\omega))$
and a non-frequency part $k_0^{-1} u_0(x)$ 
(Theorem \ref{freqdepend}). Then, we recover the boundary 
Cauchy data for the non-frequency part from the boundary voltage
data (Corollary \ref{estimationu0}). In section 4, we recover the shape and
location of the inclusion from the knowledge of the boundary
Cauchy data of the non-frequency part $k_0^{-1} u_0(x)$, and prove
finally the main results of the paper.  }

\section{The variational 
Poincar\'e operator}
We first introduce an operator whose  spectral decomposition 
will be later  the corner stone  of the identification of the inclusion $D$.
Let $H^1_\diamond(\Omega)$ be the space of functions $v$ in  
$ H^1(\Omega)$ satisfying $\int_{\partial \Omega } v ds = 0$.\\

For $u\in H^1_\diamond(\Omega)$, we infer from the Riesz theorem that 
there exists 
a unique function $Tu\in H^1_\diamond(\Omega)$ such that for 
all $v\in H^1_\diamond(\Omega)$,  
\bean
\int_\Omega \nabla Tu \cdot \nabla v dx = \int_D \nabla u \cdot \nabla v dx. 
\eean
The variational Poincar\'e operator $T: H^1_\diamond(\Omega)\rightarrow
 H^1_\diamond(\Omega)$
is easily seen to be self-adjoint 
and bounded with norm $\|T\| \leq 1$. \\

The spectral problem for $T$  reads
as: 
Find $(\lambda, w) \in \mathbb R\times H^1_\diamond(\Omega)$, $w\not=0$
such that $\forall v \in H^1_\diamond(\Omega)$,
\bea
\lambda \int_\Omega \nabla w \cdot \nabla v dx = \int_D \nabla w \cdot \nabla v dx.
\eea

Integrating by parts, one immediately obtains that any eigenfunction 
$w$ is harmonic in $D$ and in $D^\prime = \Omega\setminus \overline D$, and
satisfies the transmission  and boundary conditions
\bea
w|^+_{\partial D} = w|^-_{\partial D}, \qquad 
\partial_{\nu_D} w|^+_{\partial D}  = 
(1-\frac{1}{\lambda})\partial_{\nu_D} w|^-_{\partial D}, \qquad  
 \partial_{\nu_\Omega} w = 0,
\eea
where $w|^\pm_{\partial D} (x)= 
\lim_{t\rightarrow 0} w(x\pm t\nu_D(x))$ for $x\in\partial D$.   
In other words, $w$ is a solution to \eqref{maineq} for 
$k = k_0(1-\frac{1}{\lambda})$ and $f=0$.\\

Let $\mathfrak H_\diamond$  the space  of harmonic functions in $D$ and
$D^\prime$, with zero mean $\int_{\partial \Omega} u ds(x) =0$, and
zero normal derivative $\partial_{\nu_\Omega} u = 0$ on $\partial \Omega$,
and with finite energy semi-norm
\bea
\|u\|_{\mathfrak H_\diamond} = \int_\Omega |\nabla u|^2 dx.
\eea 
Since the functions in  $\mathfrak H_\diamond$ are harmonic
in $D^\prime$, the  
$\mathfrak H_\diamond$
is a closed subspace of $H^1(\Omega)$. Later on, we will give
a new characterization of the space $\mathfrak H_\diamond$ in 
terms of the single layer potential on  $\partial D$ associated with the 
Neumann function of $\Omega$. 
 \\

 We remark that $Tu = 0$ for all
$u$ in $H^1_0(D^\prime)$, and $Tu = u$ for all
$u$ in $H^1_0(D)$ (the set of functions in $H^1(D)$ with trace zero).\\

We also remark that
$T\mathfrak H_\diamond \subset \mathfrak H_\diamond $
and hence the  restriction of $T$ to  $\mathfrak H_\diamond $
defines a linear bounded operator.  Since we are interested in
harmonic functions  in  $D$ and $D^\prime$ we only consider
the action of $T$ on  the closed space $\mathfrak H_\diamond $. We 
further keep the notation $T$ for the restriction of $T$ to 
$\mathfrak H_\diamond $.  We will prove later that $T$ has
only isolated eigenvalues with an accumulation point  $1/2$. 
We denote by $\left(\lambda_n^-\right)_{n\geq 1}$ the eigenvalues of
$T$   repeated according to their multiplicity, and 
 ordered as follows
\bea
 0<\lambda_1^- \leq \lambda_2^- \leq \cdots < \frac{1}{2},
\eea
 in $(0, 1/2]$ and, similarly,
 \bea
1> \lambda_1^+ \geq \lambda_2^+ \geq \cdots > \frac{1}{2}.
\eea
 the eigenvalues in $[1/2, 1)$. The eigenvalue $1/2$ is the unique accumulation
point of the spectrum. 
 \begin{remark}
 In contrast with the Dirichlet Poincar\'e variational spectral problem,  $0$  
 is not an eigenvalue of $T$. In fact if $w$ is an eigenfunction
 associated to zero, then it satisfies 
  \bea
\left \{
\ba{lllcc}
\Delta w(x) =  0 & \textrm{in}
\quad D^\prime,\\
\nabla w(x) = 0& \textrm{in}
\quad D,\\
 \partial_{\nu_\Omega} w(x)=  0  & \textrm{on}
\quad \partial \Omega,\\
\int_{\partial \Omega} w(x) ds(x) = 0. &  
\ea
\right.
\eea
Since this system of equations has only the trivial solution, zero is not
in the point spectrum of $T$.
\end{remark}
Next, we will characterize the spectrum of  $T$ via
the mini-max principle.  
\begin{proposition}  The variational Poincar\'e operator has the
following decomposition
\bean \label{Tdecomp}
T= \frac{1}{2}I +K,
\eean
where $K$ is a compact self-adjoint operator.  Let  $w_n^\pm, \; n\geq 1$ 
be the  eigenfunctions associated to the eigenvalues 
$\left(\lambda_n^-\right)_{n\geq 0}$. Then 
\bea
\lambda_1^-&=& \min_{0\not= w\in  \mathfrak H_\diamond}
\frac{\int_D|\nabla w(x)|^2 dx}{\int_\Omega |\nabla w(x)|^2 dx},\\
\lambda_n^-&=& \min_{0\not=w \in  \mathfrak H_\diamond, w\perp w_1,\cdots, w_{n-1} }
\frac{\int_D |\nabla w(x)|^2 dx}{\int_\Omega |\nabla w(x)|^2 dx},\\ 
&=& \min_{F_n\subset  \mathfrak H_\diamond, \; dim(F_n) = n}\max_{w\in F_n}
\frac{\int_D |\nabla w(x)|^2 dx}{\int_\Omega |\nabla w(x)|^2 dx},\\
\eea

and similarly 
\bea
\lambda_1^+&=& \max_{0\not=w\in  \mathfrak H_\diamond}
\frac{\int_D|\nabla w(x)|^2 dx}{\int_\Omega |\nabla w(x)|^2 dx},\\
\lambda_n^+&=& \min_{0\not=w \in  \mathfrak H_\diamond, w\perp w_1,\cdots, w_{n-1} }
\frac{\int_D |\nabla w(x)|^2 dx}{\int_\Omega |\nabla w(x)|^2 dx},\\ 
&=& \max_{F_n\subset  \mathfrak H_\diamond, \; dim(F_n) = n}\min_{w\in F_n}
\frac{\int_D |\nabla w(x)|^2 dx}{\int_\Omega |\nabla w(x)|^2 dx}.\\
\eea
\end{proposition}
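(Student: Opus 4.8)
The plan is to work in the real Hilbert space $\mathfrak H_\diamond$ equipped with the energy inner product $\langle u,v\rangle := \int_\Omega \nabla u \cdot \nabla v\, dx$, which is exactly the structure in which $T$ was defined. Taking $v=u$ in the defining identity for $T$ gives $\langle Tu,u\rangle = \int_\Omega \nabla(Tu)\cdot\nabla u\,dx = \int_D |\nabla u|^2\,dx$, so that the Rayleigh quotient is precisely the ratio appearing in the statement, $\langle Tu,u\rangle/\langle u,u\rangle = \int_D|\nabla u|^2\,dx / \int_\Omega |\nabla u|^2\,dx$. Since $T$ is self-adjoint and $0\le \langle Tu,u\rangle \le \langle u,u\rangle$, we have $0\le T\le I$, and $K:=T-\tfrac12 I$ is automatically bounded and self-adjoint. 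The whole content of the first assertion therefore reduces to proving that $K$ is \emph{compact}.

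For compactness I would reduce the quadratic form of $K$ to the boundary $\partial D$. For $u,v\in\mathfrak H_\diamond$ one has $\langle Ku,v\rangle = \int_D \nabla u\cdot\nabla v\,dx - \tfrac12\int_\Omega \nabla u\cdot\nabla v\,dx$. Integrating by parts and using that $u$ is harmonic in $D$ and in $D'$, that $\partial_{\nu_\Omega} u=0$, and the divergence theorem on each side of $\partial D$, every volume term collapses to an integral over $\partial D$, and the result is $\langle Ku,v\rangle = \tfrac12\int_{\partial D}\bigl(\partial_{\nu_D} u|^-_{\partial D}+\partial_{\nu_D} u|^+_{\partial D}\bigr)\,v\,ds$. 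Thus $K$ is governed by the \emph{average} of the two normal derivatives across $\partial D$ paired with the trace, which is exactly the structure of the Neumann–Poincar\'e (adjoint double-layer) operator. Representing the elements of $\mathfrak H_\diamond$ through the single-layer potential on $\partial D$ associated with the Neumann function of $\Omega$ (the characterization announced later in the paper) conjugates $K$ to this boundary operator. Because $\partial D$ is of class $C^{2,\varsigma}$, the kernel of the Neumann–Poincar\'e operator is weakly singular and the operator is compact (equivalently, the averaged-normal-derivative map gains a derivative and composes with the compact embedding $H^{s}(\partial D)\hookrightarrow L^2(\partial D)$, $s>0$, on the compact manifold $\partial D$). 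This yields compactness of $K$, and hence the decomposition $T=\tfrac12 I + K$ with $K$ compact self-adjoint. The spectral theorem then gives that $T$ has only isolated eigenvalues of finite multiplicity accumulating solely at $\tfrac12$; the negative eigenvalues of $K$ produce the $\lambda_n^-\in(0,\tfrac12)$ and the positive ones the $\lambda_n^+\in(\tfrac12,1)$.

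With the spectral picture in hand, the min-max formulas follow from the classical Courant–Fischer theorem applied to the self-adjoint $T$. The eigenvalues $\lambda_n^-$ lie strictly below the single point $\tfrac12$ of essential spectrum, so for each $n$ the $n$-th eigenvalue counted from the bottom is $\lambda_n^- = \min_{\dim F_n = n}\ \max_{0\ne w\in F_n} R(w)$ with $R(w)=\int_D|\nabla w|^2\,dx/\int_\Omega|\nabla w|^2\,dx$, and the equivalent consecutive form $\lambda_n^- = \min_{w\perp w_1^-,\dots,w_{n-1}^-} R(w)$; in particular $\lambda_1^- = \min_{w} R(w)$. For the $\lambda_n^+>\tfrac12$, which sit above the essential spectrum, I apply the same theorem to $-T$ (whose eigenvalues $-\lambda_n^+$ lie below $-\tfrac12$), which interchanges the roles of minimum and maximum and yields $\lambda_1^+ = \max_w R(w)$ and $\lambda_n^+ = \max_{\dim F_n=n}\ \min_{0\ne w\in F_n} R(w)$. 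The main obstacle in this argument is the compactness of $K$: everything downstream is the standard Courant–Fischer machinery, whereas establishing compactness requires the boundary-integral reduction above together with the regularity of $\partial D$, and is where the $C^{2,\varsigma}$ hypothesis on the inclusions in $\mathfrak D$ is genuinely used.
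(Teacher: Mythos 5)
Your proposal is correct and follows essentially the same route as the paper: it sets $K=T-\tfrac12 I$, integrates by parts to reduce the quadratic form of $K$ to the boundary integral $\int_{\partial D}K_D^*\bigl[S_D^{-1}[u|_{\partial D}]\bigr]v\,ds$ involving the Neumann--Poincar\'e operator associated with the Neumann function of $\Omega$, deduces compactness from the weak singularity of that kernel on the $C^{2,\varsigma}$ boundary, and then applies the Courant--Fischer min-max principle. The only cosmetic difference is that you make the intermediate step (the average of the two normal derivatives) and the application of min-max to $-T$ for the $\lambda_n^+$ explicit, which the paper leaves implicit.
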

\begin{proof}
We follow the approach of \cite{BT} for the spectrum of the Poincar\'e operator 
in the whole space.\\

Define the operator $K:   \mathfrak H_\diamond \to   \mathfrak H_\diamond$ by
\bean \label{operatorK}
2\int_\Omega \nabla Ku \cdot \nabla v dx = \int_D \nabla u \cdot \nabla v dx - 
\int_{D^\prime} \nabla u \cdot \nabla v dx. 
\eean
Then $K$ is a bounded self-adjoint operator  with norm $\|K\|\leq 1$. 
The first step of the proof is to show that $K$ is indeed a compact 
operator. \\

Let $\mathcal N(x,z)$ be the Neumann function for the Laplacian  in
$\Omega$, that is, the solution to
 \bean \label{systemN}
\left \{
\ba{lllcc}
\Delta \mathcal N(x,z) =  \delta_z & \textrm{in}
\quad \Omega,\\
 \partial_{\nu_\Omega} \mathcal N(x, z)= \frac{1}{|\partial \Omega|}   
& \textrm{on}
\quad \partial \Omega,\\
\int_{\partial \Omega} \mathcal N(x, z) ds(x) = 0, &
\ea
\right.
\eean
 where $\delta_z$ is the Dirac mass at $z$. 
 
 Define the single layer potential  
 $S_D: H^{-\frac{1}{2}}(\partial D)\rightarrow  \mathfrak H_\diamond$ by
 \bea
 S_D[\varphi](x) = \int_{\partial D} \mathcal N(x, z)  \varphi(z) ds(z).
 \eea
 Since the Neumann function   and the Laplace Green function in the
 whole space have equivalent weak singularities as $x\to z$, 
 (see for instance Lemma 2.14 in \cite{AK}) the operator
 $S_D$ satisfies the same jump relations through 
 the boundary of $D$ as the single layer of the Laplace Green function,
 that is,  
 \bea
 \partial_{\nu_D} S_D[\varphi](x)|^\pm = \pm \frac{1}{2}\varphi(x) +K^*_D[\varphi](x),  
 \eea
  for $x\in \partial D$, where $K^*_D:
   H^{-\frac{1}{2}}(\partial D)\rightarrow H^{-\frac{1}{2}}(\partial D)$, defined by
 \bea
 K^*_D[\varphi](x) = \int_{\partial D} \partial_{\nu_D(x)} \mathcal 
N(x, z)  \varphi(z) ds(z), 
 \eea
 is a compact operator. Here, $H^s(\partial D)$ are the usual Sobolev spaces on $\partial D$.  \\
 
 It can also be shown that  $S_D: 
 H^{-\frac{1}{2}}(\partial D)\rightarrow H^{\frac{1}{2}}(\partial D)$ 
 is invertible (this result is not true in general for the single layer
  the Laplace Green function in dimension two. Nevertheless, the operator $S_D$ can 
  be slightly modified to ensure invertibility \cite{ando}). \\
 
 Integrating by parts over $D$ and $D^\prime$ in \eqref{operatorK}, 
 using the jump conditions  and the fact that 
 $u$ lies in $\mathfrak H_\diamond$, we obtain 
 \bea
\int_\Omega \nabla Ku \cdot \nabla v dx = 
\int_{\partial D} K^*_D\left[S^{-1}_D[u|_{\partial D}]\right]v 
ds(x).\eea
Since $K^*_D$ is compact,  the operator  $K$
is also compact. \\

A direct calculation shows that the operator $T$ has the following decomposition
\bea
T= \frac{1}{2}I +K.
\eea
Then $T$ is Fredholm operator of index zero and enjoy the same spectral
decomposition as well as the min-max principle than the self-adjoint
and compact operator $K$.

\end{proof}

\begin{remark}
We first  remark that this result does not hold 
true if $D$ is merely Lipschitz. Finally, the space $\mathfrak H_\diamond$ 
can be defined as follows
\bea
\mathfrak H_\diamond := \left\{ S_D[\varphi]; 
 \;\; \varphi \in H^{-\frac{1}{2}}(\partial D) \right\}.
\eea
Considering this characterization, it is clear that  $\mathfrak H_\diamond$  
is a closed subspace in $H^1_\diamond(\Omega)$.
\end{remark}
We further normalize the eigenfunctions   $w_n^\pm, \; n\geq 1$ in
$\mathfrak H_\diamond$.  A direct consequence of the previous
result is the following  spectral decomposition  of functions
in  $\mathfrak H_\diamond$.

\begin{corollary}
Let $u$ be in $\mathfrak H_\diamond$. Then $u$ has the following 
spectral decomposition in $\mathfrak H_\diamond$:

\bea
u(x) = \sum_{n=1}^\infty  u_n^\pm w_n^\pm(x),
\eea  
where 
\bea
u_n^\pm = \int_\Omega \nabla u(x) \cdot \nabla w_n^\pm(x) dx.
\eea
\end{corollary}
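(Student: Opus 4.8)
The plan is to read the stated identity as nothing more than the eigenfunction expansion furnished by the spectral theorem for the compact self-adjoint operator $K$ of the preceding Proposition, carried out in the Hilbert space $\mathfrak H_\diamond$ equipped with the energy inner product $\langle u,v\rangle:=\int_\Omega\nabla u\cdot\nabla v\,dx$. First I would record that $(\mathfrak H_\diamond,\langle\cdot,\cdot\rangle)$ is a genuine Hilbert space: by the layer-potential characterization $\mathfrak H_\diamond=\{S_D[\varphi]:\varphi\in H^{-\frac12}(\partial D)\}$ it is a closed subspace of $H^1_\diamond(\Omega)$, and on functions of vanishing boundary mean the energy form controls the full $H^1$-norm via Poincar\'e's inequality, so $\langle\cdot,\cdot\rangle$ is an inner product inducing an equivalent norm. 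With this inner product the normalized eigenfunctions $w_n^\pm$ are orthonormal (eigenfunctions of the self-adjoint $K=T-\tfrac12 I$ for distinct eigenvalues are orthogonal, and within a fixed eigenspace one orthonormalizes), and the claimed coefficient is exactly the Fourier coefficient $u_n^\pm=\int_\Omega\nabla u\cdot\nabla w_n^\pm\,dx=\langle u,w_n^\pm\rangle$.

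The core step is to invoke the Hilbert--Schmidt theorem. Since $K$ is compact and self-adjoint on $\mathfrak H_\diamond$, it admits the orthogonal decomposition
\[
\mathfrak H_\diamond=\ker K\ \oplus\ \overline{\operatorname{span}}\{w_n^\pm:n\ge 1\},
\]
the closed span being taken over the eigenfunctions associated with the nonzero eigenvalues $\mu_n^\pm=\lambda_n^\pm-\tfrac12$ of $K$. On the second summand every element equals its Fourier series, so for a general $u\in\mathfrak H_\diamond$ one obtains $u=Pu+\sum_n u_n^\pm w_n^\pm$, where $P$ is the orthogonal projection onto $\ker K$, with convergence in the energy norm. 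Thus the stated formula is equivalent to the completeness assertion $\ker K=\{0\}$, that is, to the fact that $\tfrac12$ is not a point eigenvalue of $T$ but only, as asserted in Section 2, the unique accumulation point of its spectrum.

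I expect this last point to be the main obstacle, and I would treat it in parallel with the Remark that disposes of the eigenvalue $0$. If $Ku=0$ then $Tu=\tfrac12 u$, so $u$ is harmonic in $D$ and in $D^\prime$, satisfies $\partial_{\nu_\Omega}u=0$ with zero boundary mean, and across $\partial D$ obeys the transmission relations with $\lambda=\tfrac12$, namely $u|^+_{\partial D}=u|^-_{\partial D}$ and $\partial_{\nu_D}u|^+_{\partial D}=-\,\partial_{\nu_D}u|^-_{\partial D}$. Writing $u=S_D[\varphi]$ with $\varphi=S_D^{-1}[u|_{\partial D}]\in H^{-\frac12}(\partial D)$ through the invertible single layer potential of the Proposition, the continuity of $S_D[\varphi]$ is automatic and the flux relation collapses, via the jump formula $\partial_{\nu_D}S_D[\varphi]|^\pm=\pm\tfrac12\varphi+K_D^*[\varphi]$, to $K_D^*[\varphi]=0$. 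The crux is therefore the injectivity of $K_D^*$, equivalently that $0$ is not an eigenvalue of the compact operator $K_D^*$ on the smooth boundary $\partial D$; I would establish this by the same unique continuation argument used in the Remark for the eigenvalue $0$, after which the invertibility of $S_D$ forces $\varphi\equiv0$ and hence $u\equiv0$. This gives $\ker K=\{0\}$, and the expansion $u=\sum_n u_n^\pm w_n^\pm$ then holds for every $u\in\mathfrak H_\diamond$.
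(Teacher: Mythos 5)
Your overall framework is exactly the paper's (implicit) one: the paper offers no argument beyond calling the expansion ``a direct consequence of the previous result,'' i.e.\ the Hilbert--Schmidt theorem applied to $K=T-\tfrac12 I$ in $\mathfrak H_\diamond$ with the energy inner product. Your setup of that inner product, the orthonormality of the $w_n^\pm$, and the identification of the Fourier coefficients are all fine, and you are right that the entire content of the corollary reduces to the completeness assertion $\ker K=\{0\}$, equivalently that $\tfrac12$ is not an eigenvalue of $T$ on $\mathfrak H_\diamond$ --- a point the paper silently assumes (its strict orderings $\lambda_n^-<\tfrac12<\lambda_n^+$ encode it without proof).

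The gap is in your proposed resolution of that point. The Remark's argument for the eigenvalue $0$ works because $Tw=0$ forces $\int_D|\nabla w|^2=0$, so $w$ is \emph{constant} in $D$; the mixed Dirichlet--Neumann problem in $D'$ with constant data on $\partial D$ and $\partial_{\nu_\Omega}w=0$ then rigidly gives $w\equiv\mathrm{const}$, and the mean-zero condition kills it. Nothing of the sort is available for $\ker K$: there $Tw=\tfrac12 w$ only says $\int_D\nabla w\cdot\nabla v\,dx=\int_{D'}\nabla w\cdot\nabla v\,dx$ for all $v$, i.e.\ the flux balance $\partial_{\nu_D}w|^+=-\,\partial_{\nu_D}w|^-$, which is not an overdetermined Cauchy problem and is not contradicted by energy identities (both sides simply carry equal energy). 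Reducing to $K_D^*[\varphi]=0$ does not help by itself: injectivity of the Neumann--Poincar\'e operator is a delicate, geometry-dependent fact --- for the free-space kernel on a disk $K_D^*$ annihilates \emph{every} mean-zero density, so ``unique continuation as in the Remark'' cannot be the mechanism; at best one must exploit the specific correction coming from the Neumann function of $\Omega$, and no such argument is given. Note also that the paper itself later lists $0$ among the eigenvalues of $K_D^*$ (in the proof of Theorem \ref{thmbounds}), which sits in direct tension with the completeness you need; so this step is a genuine open seam in both your proposal and the paper, not a routine verification.
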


A similar spectral  decomposition also holds for the
Neumann function.

\begin{corollary}
Let $\mathcal N(x,z)$ be the  Neumann function defined 
in~\eqref{systemN}. Then 

\bea
\mathcal N(x,z) =  - \sum_{n=1}^\infty  w_n^\pm(x) w_n^\pm(z),
\eea  
for  all $x, z \in \Omega$ such that  $x\not= z$.
\end{corollary}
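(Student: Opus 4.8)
The plan is to read off the expansion coefficients of $\mathcal N(\cdot,z)$ in the eigenbasis $\{w_n^\pm\}$ of $\mathfrak{H}_\diamond$ by means of Green's formula, and then to invoke the spectral decomposition of the preceding corollary. Fix $z\in\Omega$ and, for each $n$, consider the Dirichlet pairing $\int_\Omega \nabla_x\mathcal N(x,z)\cdot\nabla w_n^\pm(x)\,dx$. Integrating by parts and using the defining system \eqref{systemN} (namely $\Delta_x\mathcal N(\cdot,z)=\delta_z$ in $\Omega$ and $\partial_{\nu_\Omega}\mathcal N=1/|\partial\Omega|$ on $\partial\Omega$), the boundary integral reduces to $\frac{1}{|\partial\Omega|}\int_{\partial\Omega}w_n^\pm\,ds$, which vanishes since every element of $\mathfrak{H}_\diamond$ has zero boundary mean, while the distributional term $-\int_\Omega \delta_z\,w_n^\pm=-w_n^\pm(z)$ survives. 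Thus the $n$-th coefficient of $\mathcal N(\cdot,z)$ is exactly $-w_n^\pm(z)$, and formally $\mathcal N(x,z)=-\sum_n w_n^\pm(z)\,w_n^\pm(x)$, which is manifestly symmetric in $x$ and $z$ as the Neumann function must be.

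To make this rigorous I would avoid expanding $\mathcal N(\cdot,z)$ directly and instead test against the single layer potential, using the characterization $\mathfrak{H}_\diamond=\{S_D[\varphi]:\varphi\in H^{-\frac12}(\partial D)\}$. For $\varphi$ on $\partial D$, the function $S_D[\varphi](x)=\int_{\partial D}\mathcal N(x,z)\varphi(z)\,ds(z)$ belongs to $\mathfrak{H}_\diamond$, so the previous corollary applies to it legitimately. Computing its coefficients $\int_\Omega\nabla S_D[\varphi]\cdot\nabla w_n^\pm\,dx$ by integration by parts over $D$ and $D^\prime$, using the jump relations for $S_D$ together with the symmetry of $\mathcal N$, produces $-\int_{\partial D}w_n^\pm\,\varphi\,ds$. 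Equating the resulting series $-\sum_n\bigl(\int_{\partial D}w_n^\pm\varphi\bigr)w_n^\pm(x)$ with $\int_{\partial D}\mathcal N(x,z)\varphi(z)\,ds(z)$ and letting $\varphi$ range over $H^{-\frac12}(\partial D)$ yields the kernel identity for $z\in\partial D$ and all $x\in\Omega$.

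The delicate step, which I expect to be the main obstacle, is the passage from $z\in\partial D$ to arbitrary $z\in\Omega$, since $\mathcal N(\cdot,z)$ is not itself an element of $\mathfrak{H}_\diamond$: it carries a singularity at $x=z$ and a nonzero flux $1/|\partial\Omega|$ on $\partial\Omega$. I would fix $x$ and regard both sides as functions of $z$: the right-hand side $-\sum_n w_n^\pm(x)w_n^\pm(z)$ is harmonic in $D$ and in $D^\prime$, and by the coefficient computation it agrees with $\mathcal N(x,\cdot)$ on $\partial D$; a uniqueness argument for the relevant transmission problem, via the unique continuation results used elsewhere in the paper, then propagates the equality into the interior, away from the diagonal $x=z$. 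Convergence of the series and the legitimacy of the termwise boundary manipulations would be controlled by interior elliptic estimates for the harmonic functions $w_n^\pm$ together with the completeness of $\{w_n^\pm\}$ in $\mathfrak{H}_\diamond$ established in the Proposition; reconciling the nonzero Neumann flux of $\mathcal N$ on $\partial\Omega$ with the vanishing flux of each $w_n^\pm$ is the one point that must be treated with care.
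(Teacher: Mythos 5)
The paper gives no proof of this corollary --- it is introduced only by the phrase ``a similar spectral decomposition also holds'' --- so the formal coefficient computation in your first paragraph is evidently the intended justification, and you are right to distrust it. What that computation actually proves is the identity $\int_\Omega\nabla_x\mathcal N(x,z)\cdot\nabla u(x)\,dx=-u(z)$ for every $u\in\mathfrak H_\diamond$ and every $z$ off $\partial D$ (the integral converges absolutely even though $\mathcal N(\cdot,z)\notin H^1(\Omega)$). In other words, $-\sum_n w_n^\pm(x)w_n^\pm(z)$ is the reproducing kernel of $\bigl(\mathfrak H_\diamond,\int_\Omega\nabla\cdot\nabla\bigr)$. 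Since $\mathcal N(\cdot,z)$ is \emph{not} an element of $\mathfrak H_\diamond$ --- infinite Dirichlet energy at $x=z$ and flux $1/|\partial\Omega|$ on $\partial\Omega$ --- knowing its pairings against all of $\mathfrak H_\diamond$ pins down only its ``component along'' $\mathfrak H_\diamond$, not the function itself. Your proposed repair does not close this gap: for fixed $x$, agreement of the two sides on the hypersurface $\partial D$ together with harmonicity does not determine them on $D'$ (uniqueness for the Dirichlet problem needs data on all of $\partial D\cup\partial\Omega$, and unique continuation needs Cauchy data, neither of which you have), and the obstruction you flag --- the mismatch of Neumann fluxes on $\partial\Omega$ and the missing singularity at $z=x$ --- is exactly why no such argument can succeed.

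Indeed the statement is false as written. Take concentric disks $\Omega=B_1$, $D=B_\rho$ in $\mathbb R^2$: each angular mode $n\ge1$ carries exactly one normalized eigenfunction, equal to $c_n(r^n+r^{-n})\cos n\theta$ (and its sine analogue) in the annulus and to $c_n(1+\rho^{-2n})\,r^n\cos n\theta$ in $D$, with $c_n^2=\bigl(2n\pi(1+\rho^{-2n})\bigr)^{-1}$, while for $|x|=r>|z|=s$ one has
\[
2\pi\,\mathcal N(x,z)=\log r-\sum_{n\ge1}\tfrac{s^n}{n}\,(r^n+r^{-n})\cos n(\theta-\phi).
\]
Comparing Fourier coefficients gives, for $z\in D$ and $x\in D'$, the explicit nonzero discrepancy $\mathcal N(x,z)+\sum_n w_n^\pm(x)w_n^\pm(z)=\tfrac{1}{2\pi}\log|x|$, with further nonzero contributions in every angular mode once $z\in D'$ with $|z|\neq\rho$. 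What survives --- and is the content of your second paragraph, which is correct modulo the restriction $\int_{\partial D}\varphi\,ds=0$ needed for $S_D[\varphi]$ to have vanishing Neumann trace on $\partial\Omega$ and hence to lie in $\mathfrak H_\diamond$ --- is the weaker statement $S_D[\varphi](x)=-\sum_n\bigl(\int_{\partial D}w_n^\pm\varphi\,ds\bigr)w_n^\pm(x)$, i.e.\ the series represents the kernel of $\mathcal N$ only after integration against mean-zero densities on $\partial D$ (equivalently, it is the kernel of the orthogonal projection onto $\mathfrak H_\diamond$). The corollary should be restated in that form; that is what your argument proves and what the subsequent sections of the paper actually use.
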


\section{Frequency dependence of the boundary data}

We will first study the regularity of the solution
 $u(x, \omega)|_{\partial \Omega}$ as
a function of the frequency function $k(\omega)$. We show that
it is indeed meromorphic with poles of finite order. Then, we use the 
unique continuation property of  meromorphic complex functions
to determine the position of the poles and their corresponding 
singular parts. \\
It turns  out that the poles are related to the plasmonic resonances
of the inclusion \cite{pierre,ando, ando2}. We finally retrieve the
non-frequency part of the potential 
from the plasmonic  spectral information.    \\

\subsection{Spectral decomposition of the solution $u(x, \omega)$}
We have the following decomposition of $u(x, \omega)$ in the basis
of the eigenfunctions of the variational Poincar\'e operator $T$.  
\begin{theorem} \label{freqdepend}
Let $u(x, \omega)$ be the unique solution to the system \eqref{maineq}. \\

Then the following decomposition holds:
\bean
u(x, \omega) = k_0^{-1} u_0(x) +   \sum_{n=1}^\infty 
\frac{  \int_{\partial \Omega} f(z) w_n^\pm(z) ds(z) 
}{k_0+\lambda_n^\pm(k(\omega) -k_0) }  w_n^\pm(x),\quad x\in \Omega,
\eean
where $u_0(x) \in H^1_\diamond (\Omega)$ 
depends only on $f$ and $D$, and is the unique solution to 
\bean \label{nonfrequencypart}
\left \{
\ba{lllcc}
\Delta v = 0 & \textrm{in}
\quad D^\prime,\\
\nabla v = 0 & \textrm{in}
\quad D,\\
 \partial_{\nu_\Omega} v = f & \textrm{on}
\quad \partial \Omega.
\ea
\right.
\eean

\end{theorem}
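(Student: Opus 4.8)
The plan is to recast \eqref{maineq} as a single operator equation on $H^1_\diamond(\Omega)$ governed by $T$, and then invert it spectrally. First I would write the weak form of \eqref{maineq}: the solution $u(\cdot,\omega)\in H^1_\diamond(\Omega)$ is characterized by $\int_\Omega\sigma\,\nabla u\cdot\nabla v\,dx=\int_{\partial\Omega}fv\,ds$ for all $v\in H^1_\diamond(\Omega)$. Inserting $\sigma=k_0+(k(\omega)-k_0)\chi_D$ and using the defining identity $\int_D\nabla u\cdot\nabla v\,dx=\int_\Omega\nabla Tu\cdot\nabla v\,dx$ of the variational Poincar\'e operator, this becomes $\langle(k_0 I+(k(\omega)-k_0)T)u,v\rangle=\langle F,v\rangle$ for all $v$, where $\langle a,b\rangle=\int_\Omega\nabla a\cdot\nabla b\,dx$ and $F\in H^1_\diamond(\Omega)$ is the Riesz representative of $v\mapsto\int_{\partial\Omega}fv\,ds$, i.e. the harmonic function in $\Omega$ with $\partial_{\nu_\Omega}F=f$. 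Hence $u$ solves $(k_0 I+(k(\omega)-k_0)T)u=F$. Since $T\ge 0$ and $\|T\|\le 1$ give $\operatorname{spec}(T)\subset[0,1]$, while $k_0+(k(\omega)-k_0)\mu=k_0(1-\mu)+\mu\,k(\omega)$ stays on the segment joining $k_0>0$ to $k(\omega)\in\mathbb{C}\setminus\overline{\mathbb{R}_-}$ (a segment avoiding the origin, since $0$ on it would force $k(\omega)\in\mathbb{R}_-$), this operator is boundedly invertible; so it suffices to exhibit one solution.

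Next I would resolve $F$ along the spectrum of $T$. Let $u_0$ be the perfect--conductor potential: set $u_0\equiv c$ on the connected inclusion $D$ and let $u_0$ solve the mixed problem $\Delta u_0=0$ in $D^\prime$, $\partial_{\nu_\Omega}u_0=f$ on $\partial\Omega$, $u_0=c$ on $\partial D$, with $c$ fixed by $\int_{\partial\Omega}u_0\,ds=0$; this is exactly \eqref{nonfrequencypart}, is independent of $\omega$, and its uniqueness follows from the Remark of Section 2 (the homogeneous version has only the trivial solution, as $0$ is not an eigenvalue of $T$). Three facts drive the argument: (i) $Tu_0=0$, because $\nabla u_0=0$ in $D$ forces $\langle Tu_0,v\rangle=\int_D\nabla u_0\cdot\nabla v\,dx=0$; (ii) $F-u_0\in\mathfrak H_\diamond$, since $F-u_0$ is harmonic in $D$ and in $D^\prime$, has vanishing Neumann trace $\partial_{\nu_\Omega}(F-u_0)=f-f=0$, and zero mean; and (iii) $u_0\perp\mathfrak H_\diamond$.

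Step (iii) is the crux, and it is precisely where connectedness of $D$ enters. Writing $\psi:=u_0-c$, so that $\psi=0$ on $D$ and, by continuity of the $H^1$ trace, $\psi=0$ on $\partial D$, one has for every $w\in\mathfrak H_\diamond$
\[
\langle u_0,w\rangle=\int_{D^\prime}\nabla\psi\cdot\nabla w\,dx=\int_{\partial\Omega}\psi\,\partial_{\nu_\Omega}w\,ds-\int_{\partial D}\psi\,\partial_{\nu_D}w|^+\,ds=0,
\]
the two boundary terms vanishing because $\partial_{\nu_\Omega}w=0$ on $\partial\Omega$ and $\psi|_{\partial D}=0$. (For several components the constants would differ and $\psi$ could not be made to vanish on all of $\partial D$, so this step genuinely uses that $D$ is star-shaped, hence connected, with a single constant $c$.) Combining (ii) and (iii) shows that $F-u_0$ is the orthogonal projection of $F$ onto $\mathfrak H_\diamond$, so by the spectral decomposition corollary $F-u_0=\sum_n\langle F,w_n^\pm\rangle\,w_n^\pm$ with $\langle F,w_n^\pm\rangle=\int_{\partial\Omega}f\,w_n^\pm\,ds$.

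Finally I would invert term by term. By (i) the operator acts as multiplication by $k_0$ on $u_0\in\ker T$ and by $k_0+\lambda_n^\pm(k(\omega)-k_0)$ on each eigenfunction $w_n^\pm$, whence
\[
u=\bigl(k_0 I+(k(\omega)-k_0)T\bigr)^{-1}F=k_0^{-1}u_0+\sum_{n=1}^\infty\frac{\int_{\partial\Omega}f\,w_n^\pm\,ds}{k_0+\lambda_n^\pm(k(\omega)-k_0)}\,w_n^\pm,
\]
which is the claimed decomposition. Convergence in $H^1_\diamond(\Omega)$ is immediate: the numerators $\int_{\partial\Omega}f\,w_n^\pm\,ds=\langle F,w_n^\pm\rangle$ are square-summable (Fourier coefficients of $F$), while the denominators are bounded below by the positive distance from $0$ to the segment $[k_0,k(\omega)]$. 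The only delicate point is (iii); everything else is the standard Riesz/Lax--Milgram machinery together with the spectral theorem for $T=\tfrac12 I+K$ already established in \eqref{Tdecomp}.
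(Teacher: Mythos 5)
Your proof is correct, and it reaches the paper's decomposition with the same central tool (the spectral resolution of the variational Poincar\'e operator $T$ on $\mathfrak H_\diamond$), but the route is organized differently. The paper first subtracts $k_0^{-1}\mathfrak f$ (your $F$) from $u$, notes that the remainder lies in $\mathfrak H_\diamond$, and computes its Fourier coefficients one by one by testing the PDE against each $w_n^\pm$ and integrating by parts; in the process it introduces an auxiliary function $\tilde f\in\mathfrak H_\diamond$ agreeing with $\mathfrak f$ on $D$ up to a constant, and $u_0$ only emerges at the end as $\mathfrak f-\tilde f$. You instead recast \eqref{maineq} as the resolvent equation $(k_0I+(k(\omega)-k_0)T)u=F$ and diagonalize it, using the splitting $F=u_0+(F-u_0)$ with $Tu_0=0$, $F-u_0\in\mathfrak H_\diamond$ and $u_0\perp\mathfrak H_\diamond$; your $F-u_0$ is precisely the paper's $\tilde f$, and your orthogonality step (iii) plays the role of the paper's identity $\int_D\nabla\mathfrak f\cdot\nabla w_n^\pm=\lambda_n^\pm\int_\Omega\nabla\tilde f\cdot\nabla w_n^\pm$. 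Your formulation buys two things the paper leaves implicit: well-posedness of \eqref{maineq} for every $k(\omega)\in\mathbb C\setminus\overline{\mathbb R_-}$ (via the spectrum of $T$ lying in $[0,1]$ and the segment $[k_0,k(\omega)]$ avoiding the origin), and an explicit indication of where connectedness of $D$ is used (a single constant $c$ on $\partial D$ is needed for $u_0\perp\mathfrak H_\diamond$). The paper's coefficient-by-coefficient computation, on the other hand, produces the concrete integration-by-parts identities that it reuses in the subsequent resonance estimates. In substance the two proofs are equivalent.
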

\proof

Let $\f$ be the unique solution in $H^1_\diamond(\Omega)$ 
 to 
\bean \label{frakfunction}
\left \{
\ba{lllcc}
\Delta \f = 0 & \textrm{in}
\quad \Omega,\\
 \partial_{\nu_\Omega}\f = f & \textrm{on}
\quad \partial \Omega.
\ea
\right.
\eean 
The function $\f$ can be written in terms of the
Neumann function as follows:   
\bea
\f(x) = \int_{\partial \Omega} \mathcal N(x, z) f(z) ds(z). 
 \eea
 Denote $\mathfrak u :=  u- k_0^{-1}\mathfrak f $.  Then $\u$ lies
in $\H_\diamond$, and has the following spectral decomposition:

\bea
\u(x) = \sum_{n=1}^\infty  \u_n^\pm w_n^\pm(x),
\eea  
where 
\bea
\u_n^\pm = \int_\Omega \nabla \u(x) \cdot \nabla w_n^\pm(x) dx.
\eea

On the other hand,  $\u(x, \omega)$ is the unique solution to 
\bean \label{systemlast}
\left \{
\ba{lllcc}
-\nabla \cdot \left(\sigma(x, \omega) \nabla \u(x, \omega)\right) = k_0^{-1}
\nabla \cdot \left(\sigma(x, \omega) \nabla \f\right)    & \textrm{in}
\quad \Omega,\\
\sigma(x, \omega) \partial_{\nu_\Omega}\u(x, \omega)= 0 & \textrm{on}
\quad \partial \Omega,\\
\int_{\partial \Omega} \u(x, \omega) ds = 0.&
\ea
\right.
\eean 
Multiplying the first equation in (\ref{systemlast}) by $w_n^\pm(x)$,
and integrating by parts over $\Omega$, we get 
\bea
\u_n^\pm = \frac{k_0^{-1}
\int_\Omega  \nabla \cdot \left(\sigma(x, \omega) \nabla \f\right) w_n^\pm dx  
 }{k_0+\lambda_n(k(w) -k_0) }.
\eea
Since $ \nabla \cdot \left(\sigma(x, \omega) \nabla \f\right) $ lies 
in $H^{-1}(\Omega)$, the integral
 in the fraction above can be understood as a dual product 
 between $H^{-1}(\Omega)$
 and $H^{1}(\Omega)$, and can be simplified through 
 integration by parts into
\bea
\int_\Omega  \nabla \cdot \left(\sigma(x, \omega) \nabla \f\right) w_n^\pm dx  
= - (k_0+\lambda_n(k(\omega) -k_0))\frac{1}{\lambda_n} 
\int_D \nabla \f \cdot \nabla w_n^\pm dx +
k_0\int_{\partial \Omega} f w_n^\pm ds(x).
\eea
Consequently, it follows that 
\bea
\u_n^\pm =  -\frac{\int_D \nabla \f \cdot \nabla w_n^\pm dx}{k_0\lambda_n^\pm} 
+ \frac{ \int_{\partial \Omega} f w_n^\pm ds(x)
 }{k_0+\lambda_n^\pm(k(\omega) -k_0) }.
\eea
Now we derive the orthogonal projection of $\f$ onto $\H_\diamond$.
Let $\tilde f(x) \in \H_\diamond$ be 
the function that coincides with $\f$ on $D$ up to a constant, and solves  
the system of equations
\bea
\left \{
\ba{lllcc}
\Delta \tilde f = 0 & \textrm{in}
\quad D^\prime,\\
\\
 \nabla \tilde f = \nabla \f  & \textrm{in}
\quad  D,\\
 \partial_{\nu_\Omega}\tilde f = 0 & \textrm{on}
\quad \partial \Omega.
\ea
\right.
\eea
Since $w_n^\pm$ is an eigenfunction of $T$ and 
$\tilde f$ belongs to $\H_\diamond$, we have 
\bea
\int_D  \nabla \f \cdot \nabla w_n^\pm dx = 
 \lambda_n^\pm \int_\Omega \nabla \tilde f\cdot  \nabla w_n^\pm dx,
\eea
which gives 
\bea
\u_n^\pm =  -k_0^{-1} \int_\Omega \nabla \tilde f \cdot \nabla w_n^\pm dx+ 
\frac{  \int_{\partial \Omega} f w_n^\pm ds(x)
 }{k_0+\lambda_n(k(\omega) -k_0) }.
\eea
Finally, we obtain the desired decomposition for $u(x, \omega)$.

\endproof 

\begin{corollary} \label{holom}
The function $ u(x, \omega) =k_0^{-1} u_0(x) + u_f(x, k(\omega))$, where 
 $k\to u_f(x, k)$ is meromorphic on 
$\mathbb C$.  Furthermore, the
poles  of  $u_f(x, k)$ are the complex values 
$(k_n^\pm)_{n \geq 1}$ solutions to the dispersion equations
\bea
k_0+ \lambda_n^\pm(k -k_0) = 0, \qquad n\geq 1
\eea
with $\lambda_n^\pm, n\geq 1$ being the eigenvalues of the variational
 Poincar\'e
operator $T$.
\end{corollary}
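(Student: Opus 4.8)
The plan is to read everything off the explicit series furnished by Theorem \ref{freqdepend}, now regarding $k$ as a free complex variable in place of $k(\omega)$:
\[
u_f(x,k) = \sum_{n=1}^\infty \frac{a_n}{k_0 + \lambda_n^\pm(k-k_0)}\, w_n^\pm(x), \qquad a_n := \int_{\partial\Omega} f\, w_n^\pm\, ds.
\]
Each summand is a rational function of $k$ with a single simple pole at the root $k_n^\pm = k_0(1 - 1/\lambda_n^\pm)$ of the dispersion relation $k_0 + \lambda_n^\pm(k-k_0)=0$, and the numerators lie in $\ell^2$: since $\{w_n^\pm\}$ is the normalized eigenbasis of $\mathfrak{H}_\diamond$ and, by the weak form of \eqref{frakfunction}, $a_n = \int_\Omega \nabla \mathfrak{f}\cdot\nabla w_n^\pm\,dx$ is the $n$-th coefficient of the $\mathfrak{H}_\diamond$-projection of $\mathfrak{f}$, Bessel's inequality gives $\sum_n |a_n|^2 \le \|\mathfrak{f}\|_{\mathfrak{H}_\diamond}^2 < \infty$. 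The whole statement thus reduces to showing that this series converges locally uniformly in the $H^1_\diamond(\Omega)$-norm on compact sets avoiding the poles.

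First I would locate the poles. From the orderings $0<\lambda_1^-\le\cdots<\tfrac12$ and $1>\lambda_1^+\ge\cdots>\tfrac12$, together with the Remark that $0$ is not an eigenvalue, every eigenvalue satisfies $\lambda_n^\pm \ge \lambda_1^- =: c_0 > 0$, whence $|k_0 + \lambda_n^\pm(k-k_0)| = \lambda_n^\pm|k - k_n^\pm| \ge c_0\, |k - k_n^\pm|$. Since $\lambda_n^\pm \to \tfrac12$, the poles obey $k_n^\pm \to -k_0$ and all lie on $(-\infty,0)$, so their only accumulation point is $-k_0$. Consequently $u_f$ is genuinely meromorphic on $\mathbb{C}\setminus\{-k_0\}$, the point $-k_0$ being a cluster of poles rather than an isolated singularity; this is the precise content of the claim, and it suffices later because the data curve $\{k(\omega)\}$ lies in $\mathbb{C}\setminus\overline{\mathbb{R}_-}$ and stays away from $-k_0$.

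Next I would prove meromorphy on an arbitrary compact $V\subset\mathbb{C}\setminus\{-k_0\}$. Set $d := \mathrm{dist}(-k_0, V)>0$. Because $k_n^\pm\to -k_0$, all but finitely many poles lie in $B(-k_0,d/2)$; for those tail indices $|k-k_n^\pm|\ge d/2$ on $V$, while the finitely many remaining terms form a sum of simple rational functions that is manifestly meromorphic with poles exactly at the $k_n^\pm\in V$. For the tail,
\[
\Big\|\sum_{n\in\mathrm{tail}} \frac{a_n}{k_0+\lambda_n^\pm(k-k_0)}\, w_n^\pm\Big\|_{H^1_\diamond}^2 = \sum_{n\in\mathrm{tail}} \frac{|a_n|^2}{|k_0+\lambda_n^\pm(k-k_0)|^2} \le \frac{4}{c_0^2 d^2}\sum_{n\in\mathrm{tail}} |a_n|^2,
\]
uniformly for $k\in V$; by the $\ell^2$ bound this converges uniformly on $V$, and since each term is holomorphic on a neighbourhood of $V$ (its pole lies off $V$), the vector-valued Morera/Cauchy-integral theorem makes the limit a holomorphic $H^1_\diamond(\Omega)$-valued function there. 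Adding the finite part, $u_f(\cdot,k)$ is meromorphic on $V$ with poles exactly at the $k_n^\pm\in V$; as $V$ is arbitrary, meromorphy on $\mathbb{C}\setminus\{-k_0\}$ follows. Applying the (bounded) trace on $\partial\Omega$, or evaluating at an interior point where $u_f$ is harmonic and hence locally smooth, transfers this to the scalar map $k\mapsto u_f(x,k)$, and the residue of the $n$-th term, $a_n\, w_n^\pm/\lambda_n^\pm$, shows the pole at $k_n^\pm$ is simple and present whenever $a_n\neq0$.

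The main obstacle is precisely the accumulation of poles at $-k_0$: one cannot sum the principal parts globally on $\mathbb{C}$, so the honest conclusion is meromorphy on $\mathbb{C}\setminus\{-k_0\}$. What rescues the argument is the two-part split—finitely many dangerous poles handled exactly, and an $\ell^2$-times-uniform-denominator estimate for the harmless tail—powered by the uniform spectral gap $\lambda_n^\pm\ge c_0>0$ that comes from $0$ not being an eigenvalue of $T$.
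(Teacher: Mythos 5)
Your argument is correct and is exactly the intended one: the paper states this corollary without proof as a direct consequence of Theorem \ref{freqdepend}, and your two-part split (finitely many poles meeting a given compact set handled exactly, plus an $\ell^2$-tail controlled by the uniform lower bound $\lambda_n^\pm\ge\lambda_1^->0$ and Bessel's inequality for the coefficients $a_n$) is the natural way to make that rigorous. Your observation that the poles accumulate at $-k_0$, so that the honest conclusion is meromorphy on $\mathbb{C}\setminus\{-k_0\}$ rather than on all of $\mathbb{C}$, is a correct refinement of the corollary as written, and it is harmless for the rest of the paper since $\Sigma\subset\mathbb{C}\setminus\overline{\mathbb{R}_-}$ stays away from $-k_0$.
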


\subsection{Retrieval of  the frequency dependent part.} 
The idea here is to recover  the
 frequency dependent part $u_f(x, k(\omega))$
from the knowledge of $u(x, \omega)$   for $\omega \in (\underline \omega,
\overline \omega)$. \\ 

The poles  of $u_f(x, k)$ are  given by 
$  k_n^\pm := k_0(1-\frac{1}{\lambda_n^\pm})$, and they can be  
 ordered as follows:
\bea
 -k_0<\cdots \leq k_2^+ \leq k_1^+ < 0
\eea
 in $(-k_0, 0)$ and, similarly,
 \bea
 k_1^- \leq k_2^- \leq \cdots <-k_0
\eea
 in $(-\infty, -k_0)$. We remark that $-k_0$  is the only accumulation
 point of the sequence of poles, i.e.,  $k_n^\pm$  tends to $-k_0$
 as $n\to \infty$. \\
 
The  plasmonic resonances  $\left(k_n^{\pm}\right)_{n\geq 1} $ 
 depend only on $k_0$, the shapes of the inclusion $D$ and the background $\Omega$. 
 They can be 
 experimentally measured and represents the plasmonic signature of 
 the inclusion.  One interesting inverse problem is to recover the 
inclusion from 
 its plasmonic resonances \cite{zou}. The magnitude of
$k_1^{-}$ is related somehow to how flat is 
the domain $D$. More precisely, we have the following result. 

\begin{lemma} \label{eigenvalueslowerbound}
There exists a constant $ \hat \delta>0$
depending only  on $k_0$  and $\mathfrak D$   such 
that 
\bea
k_n^\pm\geq -\hat \delta^{-1}, \qquad \forall  n\geq 1.
\eea

\end{lemma}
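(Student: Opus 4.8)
The plan is to reduce the statement to a lower bound on the smallest eigenvalue $\lambda_1^-$ of the variational Poincar\'e operator that is uniform over the class $\mathfrak D$, and then read off the bound on the poles through the relation $k_n^\pm=k_0(1-1/\lambda_n^\pm)$. By the stated ordering, $k_1^-$ is the most negative pole: it corresponds to the smallest eigenvalue $\lambda_1^-\in(0,1/2)$, while every $k_n^+$ already lies in $(-k_0,0)$, hence above $-k_0>k_1^-$. So it suffices to bound $k_1^-$ from below, and since $k_1^-=k_0(1-1/\lambda_1^-)$ is increasing in $\lambda_1^-$, any uniform lower bound $\lambda_1^-\geq c_0>0$ gives
\[
k_n^\pm\ \geq\ k_1^-\ \geq\ k_0\bigl(1-c_0^{-1}\bigr)\ =\ -k_0\bigl(c_0^{-1}-1\bigr)\ =:\ -\hat\delta^{-1}.
\]

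Using the min-max characterization of the previous Proposition and splitting $\Omega=D\cup D'$, one has
\[
\lambda_1^- \;=\; \min_{0\neq w\in\mathfrak H_\diamond}\frac{\int_D|\nabla w|^2}{\int_D|\nabla w|^2+\int_{D'}|\nabla w|^2},
\]
so the desired bound $\lambda_1^-\geq c_0$ is equivalent to a uniform energy comparison
\[
\int_{D'}|\nabla w|^2\,dx\ \leq\ C\int_D|\nabla w|^2\,dx\qquad\text{for all }w\in\mathfrak H_\diamond,
\]
with $c_0=(1+C)^{-1}$. The denominator never vanishes for $w\neq0$: if $\nabla w=0$ on $D$, then $w$, being harmonic in $D'$ with $\partial_{\nu_\Omega}w=0$ and a constant trace on $\partial D$, is globally constant, hence $0$ by the zero-mean condition. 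To prove the comparison I would use that $w|_{D'}$ is the Dirichlet-energy minimizer among all $v\in H^1(D')$ with $v|_{\partial D}=g:=w|_{\partial D}$ (the condition $\partial_{\nu_\Omega}w=0$ is precisely the natural boundary condition of this minimization). It therefore suffices to exhibit one competitor extension of $g$ into $D'$ whose energy is controlled by $\int_D|\nabla w|^2$.

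I would construct the competitor by reflection across $\partial D$. Let $\Phi\colon x\mapsto x-2d(x)\nu_D$ be the $C^1$ reflection (here $d$ is the signed distance to $\partial D$) defined on a tubular neighborhood $U$ of $\partial D$, set $c_g=\frac{1}{|\partial D|}\int_{\partial D}g\,ds$, and put $v=c_g+\chi\,(w\circ\Phi-c_g)$ on $U\cap D'$ and $v=c_g$ on the rest of $D'$, where $\chi$ equals $1$ near $\partial D$ and $0$ at the outer edge of $U$. Then $v|_{\partial D}=g$, a change of variables bounds $\int_{U\cap D'}\chi^2|\nabla(w\circ\Phi)|^2$ by $\int_D|\nabla w|^2$, and the zero-order term $\int (w\circ\Phi-c_g)^2|\nabla\chi|^2$ is controlled through the Poincar\'e--Wirtinger inequality $\|w-c_g\|_{L^2(D)}\leq C_D\|\nabla w\|_{L^2(D)}$ with the boundary mean $c_g$. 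This yields $\int_{D'}|\nabla w|^2\leq\int_{D'}|\nabla v|^2\leq C\int_D|\nabla w|^2$.

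The main obstacle, and the only place where the full strength of the class $\mathfrak D$ is used, is making $C$ uniform over $\mathfrak D$. This needs: a uniform lower bound on the reach of $\partial D$ together with uniform $C^1$ bounds on $\Phi$, both supplied by the curvature bound $\|\Upsilon\|_{C^{2,\varsigma}}\leq m$; a fixed admissible collar width, available because $\operatorname{dist}(\partial D,\partial\Omega)\geq\delta$ since $D\subset\{|x|<b_1-\delta\}$ while $\{|x|<b_1\}\subset\Omega$; and a uniform Poincar\'e--Wirtinger constant $C_D$, which holds because the domains $D$ are uniformly bounded, contain the fixed ball $\{|x|<b_0\}$, and have uniformly $C^{2,\varsigma}$ boundaries. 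An alternative route to the same uniformity is a compactness argument: $D\mapsto\lambda_1^-(D)$ is continuous for $C^2$-convergence of the radial profiles $\Upsilon$ (after pulling the eigenvalue problem back to the unit ball), and $\{\Upsilon:\,b_0\leq\Upsilon\leq b_1-\delta,\ \|\Upsilon\|_{C^{2,\varsigma}}\leq m\}$ is compact in $C^2$ by Arzel\`a--Ascoli, so the strictly positive function $\lambda_1^-$ attains a positive minimum there. Either way $\lambda_1^-\geq c_0>0$ uniformly, and the claimed bound on $k_n^\pm$ follows.
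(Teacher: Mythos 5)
Your argument is correct in outline and reaches the right conclusion, but it takes a genuinely different route from the paper. The paper does not prove a lower bound on $\lambda_1^-$ by hand: it observes that every $D\in\mathfrak D$ is star-shaped with $r_D:=\inf_{x\in\partial D}x\cdot\nu_D(x)$ bounded below and above by constants depending only on $\mathfrak D$, and then invokes an explicit spectral bound for the Neumann--Poincar\'e operator on star-shaped domains (a modification of Theorem 2.2 in \cite{AS}, see also Lemma 2.9 in \cite{AK}) giving $k_1^-\geq -1-\left(\frac{r_D+2}{r_D}\right)^2$; uniformity over $\mathfrak D$ is then immediate from the uniform positivity of $r_D$. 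Your reduction to a uniform bound $\lambda_1^-\geq c_0$ via the min-max principle, and the equivalence with the energy comparison $\int_{D'}|\nabla w|^2\leq C\int_D|\nabla w|^2$ proved by the Dirichlet principle plus a reflection extension, is a legitimate self-contained alternative; it has the merit of not using star-shapedness at all (only uniform $C^{2,\varsigma}$ regularity and the separation from $\partial\Omega$), so it would cover a wider class of inclusions. What it costs you is that the entire difficulty is transferred to the uniformity of the geometric constants (lower bound on the reach of $\partial D$, bi-Lipschitz bounds on the reflection $\Phi$, the Poincar\'e--Wirtinger constant with boundary mean, or alternatively the continuity of $D\mapsto\lambda_1^-(D)$ under $C^2$ convergence of profiles needed for the compactness route) --- these are standard but each requires a real argument, and this is exactly the bookkeeping the paper avoids by reducing everything to the single scalar $r_D$ and citing the explicit formula. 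If you wanted to align with the paper while keeping your framework, note that the Rellich-identity proof behind the \cite{AS}/\cite{AK} bound is essentially a quantitative version of your energy comparison in which $x\cdot\nabla w$ plays the role of your competitor.
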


\proof

Let $D$ be an inclusion  in $\mathfrak D$. Then  $D$ is star-shaped and
is given by 
$$ D :=\{x\in \mathbb R^d : |x|< 
\Upsilon(\hat x), \; \hat x = \frac{x}{|x|} \}, $$
where  $\Upsilon: \mathbb S^{d}\rightarrow \mathbb (b_0, b_1 -\delta)$,
is $C^{2,\varsigma}$, $\varsigma >0$. \\

A  forward computation shows that  the constant $$r_D := \inf_{x\in 
\partial D} x\cdot \nu_D(x),$$
is strictly positive, and is lower and upper  bounded by constants  
that depend only 
on $\mathfrak D$.\\

On the other hand, a simple modification of the proof of Theorem 2.2 in \cite{AS}
(see also Lemma 2.9 in  \cite{AK}),
gives 
 
\bea
- \infty <-1- \left(\frac{r_D+2}{r_D}\right)^2  \leq k_1^{-},
\eea
which completes the proof. 


\endproof

Since the function $u_f(x, k)$ have isolated poles,  the complementary
of the singular set is connected and  the unique continuation of 
holomorphic 
functions implies the uniqueness  in the identification of 
the poles $k_n^\pm, n\geq 1$,   $k_0^{-1} u_0(x) $ and 
$u_f(x, k(\omega))$. \\

In order to derive stability estimates in the retrieval of the 
frequency independent part $ k_0^{-1} u_0(x)$ of the solution, we 
need to  obtain uniform bounds  on the frequency dependent part $u_f$
on the boundary $\partial \Omega$.

\begin{theorem} \label{thmbounds}
Let $D$ be an inclusion in $\mathfrak D$.
 Then there exists a constant 
$C= C(\mathfrak D,  \Omega, k_0)>0$ such that 
\bean \label{ukbound}
\|u(x,\omega)-k_0^{-1} u_0(x)\|_{C^0(\partial \Omega)} 
\leq \frac{C}{\textrm{dist}(k(\omega), [k_1^{-}, 0])}
 \|f(x)\|_{{H^{ -\frac{1}{2}}(\partial \Omega) }}.
\eean
The constant $C$ tends to $+\infty$ as $\hat \delta$ tends to zero. 
\end{theorem}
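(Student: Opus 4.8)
The plan is to bound the frequency-dependent part $u_f(x,\omega)=u(x,\omega)-k_0^{-1}u_0(x)$ directly from its spectral series
\[
u_f(x,k(\omega)) = \sum_{n=1}^\infty
\frac{\int_{\partial\Omega} f\, w_n^\pm\, ds}{k_0+\lambda_n^\pm(k(\omega)-k_0)}\, w_n^\pm(x),
\]
established in Theorem~\ref{freqdepend} and Corollary~\ref{holom}. The key observation is that the denominator $k_0+\lambda_n^\pm(k(\omega)-k_0)=\lambda_n^\pm(k(\omega)-k_n^\pm)$ is controlled from below uniformly in $n$. Indeed, since the eigenvalues $\lambda_n^\pm$ accumulate only at $1/2$ and lie in $(0,1)$, they are bounded away from zero, and by Lemma~\ref{eigenvalueslowerbound} the poles $k_n^\pm$ all lie in the fixed interval $[k_1^-,0]\subset[-\hat\delta^{-1},0]$. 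Hence $|k_0+\lambda_n^\pm(k(\omega)-k_0)|\geq c\,\mathrm{dist}(k(\omega),[k_1^-,0])$ for a constant $c$ depending only on the spectral gap and on $\hat\delta$, which extracts the resonance factor $1/\mathrm{dist}(k(\omega),[k_1^-,0])$ in the stated estimate.

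With the uniform denominator bound in hand, the remaining task is to control the series $\sum_n (\int_{\partial\Omega} f\, w_n^\pm\, ds)\, w_n^\pm(x)$ in the $C^0(\partial\Omega)$ norm. First I would interpret the numerator coefficient: since $\mathfrak f(x)=\int_{\partial\Omega}\mathcal N(x,z)f(z)\,ds(z)$ and $\{w_n^\pm\}$ is orthonormal in $\mathfrak H_\diamond$, the quantity $\int_{\partial\Omega} f\, w_n^\pm\, ds$ is (up to the normal-derivative/Green-identity pairing) the spectral coefficient of $\mathfrak f$ projected onto $\mathfrak H_\diamond$, so $\sum_n|\int_{\partial\Omega} f\, w_n^\pm\, ds|^2$ is comparable to $\|P\mathfrak f\|_{\mathfrak H_\diamond}^2\lesssim\|f\|_{H^{-1/2}(\partial\Omega)}^2$ by the trace/Neumann-function mapping properties. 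This yields an $H^1(\Omega)$ (hence $L^2$) bound, but the statement demands a pointwise bound on $\partial\Omega$, which is the genuine difficulty.

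The hard part will be upgrading from the energy estimate to the $C^0(\partial\Omega)$ estimate, because the eigenfunctions $w_n^\pm$ individually need not be uniformly bounded. The natural route is to exploit the smoothness of $\partial D$ built into $\mathfrak D$: since $D\in\mathfrak D$ has $C^{2,\varsigma}$ boundary and $\overline D\subset\Omega$ with $\Upsilon<b_1-\delta$, the functions $w_n^\pm$ are harmonic in $D'=\Omega\setminus\overline D$ with $\partial_{\nu_\Omega}w_n^\pm=0$ on $\partial\Omega$, so interior elliptic regularity together with the fixed distance from $\partial D$ to $\partial\Omega$ gives a uniform gain: $\|w_n^\pm\|_{C^0(\partial\Omega)}\lesssim \|w_n^\pm\|_{L^2(\text{collar near }\partial\Omega)}$ with constants depending only on $\mathfrak D,\Omega$. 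One then represents $u_f$ on $\partial\Omega$ via the single-layer/Neumann-function representation and estimates the operator norm from the coefficient sequence into $C^0(\partial\Omega)$, absorbing the summability through the compactness of $K=T-\tfrac12 I$ and the regularizing effect of $S_D$. Collecting the uniform denominator bound, the $\ell^2$ control of the numerators by $\|f\|_{H^{-1/2}(\partial\Omega)}$, and the uniform regularity of the eigenfunctions away from $\partial D$ yields a constant $C=C(\mathfrak D,\Omega,k_0)$; tracking the dependence on the spectral lower bound shows $C\to+\infty$ as $\hat\delta\to 0$, since the gap controlling the denominator degenerates in that limit.
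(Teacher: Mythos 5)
Your proposal is correct in substance but follows a genuinely different route from the paper. The paper never sums the spectral series directly: it writes $u_f = S_D[\varphi_f]$ for a density $\varphi_f \in H^{-1/2}(\partial D)$, derives from the transmission condition the integral equation $\bigl(\tfrac{k_0+k}{2(k_0-k)} I + K^*_D\bigr)[\varphi_f] = \tfrac{1}{k-k_0}\partial_{\nu_D} u_0|^+$, symmetrizes $K^*_D$ via Calder\'on's identity so that its spectral resolution yields $\|\varphi_f\|_{-\frac12,S} \lesssim \textrm{dist}(k,[k_1^-,0])^{-1}\|\partial_{\nu_D}u_0|_+\|_{-\frac12,S}$, converts between the $S_D$-induced and standard $H^{-1/2}$ norms (this is where $\lambda_1^-$ and Lemma \ref{eigenvalueslowerbound} enter), bounds $\partial_{\nu_D}u_0|_+$ by $\|f\|_{H^{-1/2}(\partial\Omega)}$ through the identity $u_0 - S_D[\partial_{\nu_D}u_0|_+]=\mathfrak f$, and finally gets the $C^0(\partial\Omega)$ bound from the kernel estimate $\|u_f\|_{C^0(\partial\Omega)} \leq \max_{x\in\partial\Omega}\|\mathcal N(x,\cdot)\|_{H^1(\Omega_\delta)}\,\|\varphi_f\|_{-\frac12}$, using $\textrm{dist}(D,\partial\Omega)\geq\delta$. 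You instead stay entirely in the variational framework of $T$ on $\mathfrak H_\diamond$; both arguments ultimately rest on the same two facts, namely the factorization $k_0+\lambda_n^\pm(k-k_0)=\lambda_n^\pm(k-k_n^\pm)$ with all poles confined to $[k_1^-,0]\subset[-\hat\delta^{-1},0]$ and $\lambda_1^-\geq \hat\delta k_0/(1+\hat\delta k_0)$, and the fixed separation between $\partial D$ and $\partial\Omega$. Your route is arguably more elementary since it avoids the layer-potential machinery and the norm comparison between $\|\cdot\|_{-\frac12,S}$ and $\|\cdot\|_{-\frac12}$, at the price of having to justify the Parseval identity $\sum_n|\int_{\partial\Omega}f w_n^\pm ds|^2=\|P\mathfrak f\|^2_{\mathfrak H_\diamond}\lesssim\|f\|^2_{H^{-1/2}(\partial\Omega)}$, which does hold since $\int_{\partial\Omega}f w_n^\pm ds=\int_\Omega\nabla\mathfrak f\cdot\nabla w_n^\pm\,dx$.

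One step in your last paragraph needs to be restated, because as written it does not close. Per-eigenfunction bounds $\|w_n^\pm\|_{C^0(\partial\Omega)}\lesssim \|w_n^\pm\|_{L^2}$ combined with only $\ell^2$ coefficients cannot give absolute convergence of the series in $C^0(\partial\Omega)$, and the compactness of $K$ supplies no summability here. The correct (and simpler) finish is to sum first and regularize afterwards: the $\ell^2$ coefficient bound gives $\|\nabla u_f\|_{L^2(\Omega)}\lesssim \textrm{dist}(k,[k_1^-,0])^{-1}\|f\|_{H^{-1/2}(\partial\Omega)}$ directly by orthonormality in $\mathfrak H_\diamond$; then, since $u_f$ is harmonic in the collar $\{x\in\Omega:\ \textrm{dist}(x,\partial\Omega)<\delta\}$ with $\partial_{\nu_\Omega}u_f=0$ and zero mean on $\partial\Omega$, boundary elliptic regularity plus Poincar\'e give $\|u_f\|_{C^0(\partial\Omega)}\leq C(\Omega,\delta)\|\nabla u_f\|_{L^2(\Omega)}$. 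With that replacement your argument is complete and the constant blows up as $\hat\delta\to 0$ exactly through the lower bound on $\lambda_1^-$, as claimed.
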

\proof 
Recall that the  function  $u_f(x, k) = u(x,\omega)- k_0^{-1} 
u_0(x)$,  defined in Corollary \ref{freqdepend}, lies in 
$\mathfrak H_\diamond$, and satisfies 
\bea
k_0\partial_{\nu_D} u_f|^+_{\partial D} - k\partial_{\nu_D} u_f|^-_{\partial D} = 
-\partial_{\nu_D} u_0|^+_{\partial D}.
\eea
   Hence there exists a potential
$\varphi_f \in H^{-\frac{1}{2}}(\partial D)$
satisfying 
\bea
u_f(x,k) = S_D[\varphi_f](x),
\eea
for $x\in \Omega$. Note that  the right-hand side term in the equality
above is harmonic in
both $D$ and $D^\prime$ and continuous through the boundary $\partial D$.
 The transmission  condition of $\partial_{\nu_D} u_f|_{\partial D}$ 
 over $\partial D$ implies
\bea
(\frac{k_0+k}{2(k_0-k)} I + K^*_D)[\varphi_f](x) &=&
 \frac{1}{k-k_0} \partial_{\nu_D} u_0(x)|^+_{\partial D},
\eea
for $x\in \partial D$.\\

On the other hand,  Calderon's identity  holds for the
operators $K_D, K^*_D$ and $S_D$, and we have 
\bea
S_D K^*_D &=& K_D S_D 
\eea
Hence,  $K^*_D$  becomes  a self-adjoint compact operator
 in the topology induced by the  scalar product $$
 \langle 
 \cdot \, ,  \cdot
  \rangle_{-\frac{1}{2}, S} = \langle -S_D 
 \cdot \, ,  \cdot
  \rangle_{\frac{1}{2}, -\frac{1}{2}}.$$    
A direct calculation shows that $\|K_D^*\|= 1$ and the spectrum of $K_D^*$
lies in $(-\frac{1}{2}, \frac{1}{2}] $. \\

Moreover,
the eigenvalues of $K^*_D$  are given by
\bea
0, \;\;\frac{1}{2},\;\; \frac{k_0+k_n^\pm}{2(k_0-k_n^\pm)},  \; n\geq 1. 
\eea
Spectral decomposition of self-adjoint compact operator shows that
\bean \label{sinequality}
\|\varphi_f\|_{-\frac{1}{2}, S} \leq  \frac{1}{\textrm{dist}(k, [k_1^{-}, 0])}  
\frac{2}{k_0} \|  \partial_{\nu_D} u_0|_+ \|_{-\frac{1}{2}, S}.
\eean

In order to derive precise estimates with constants that  depend only on
$\hat \delta$ and $\Omega$, we introduce the more conventional
 $H^{\frac{1}{2}}$-norm:
\bea
\|\psi\|_{\frac{1}{2}} =  \|v_\psi\|_{H^1(D)},
\eea
where $v_\psi$ is harmonic in $D$, that is, $\Delta v_\psi = 0$ on $D$,  and 
satisfies
$v_\psi|_{\partial D} = \psi$ on $\partial D$. \\

Following \cite{McL}, we define the associated $H^{-\frac{1}{2}}$-norm by
\bea
\| \varphi \|_{-\frac{1}{2}}  = \max_{ 0\not= \psi \in H^{\frac{1}{2}}(\partial D)}
  \frac{ \left| \int_{\partial D}  \varphi  \psi ds \right| }{ \| \psi\|_{\frac{1}{2}} }.
\eea
Now, we shall estimate $\|\varphi_f\|_{-\frac{1}{2}}$ in terms of the
quantity $\|\varphi_f\|_{-\frac{1}{2}, S}$. We have
\bea
\| \varphi_f \|_{-\frac{1}{2}}  = \max_{ 0\not= \psi \in H^{\frac{1}{2}}(\partial D)}
  \frac{ \left| \int_{\partial D}  \varphi_f  \psi ds \right| }{ \| \psi\|_{\frac{1}{2}} }\\
  \leq  \max_{ 0\not= \psi \in H^{\frac{1}{2}}(\partial D)}
  \frac{ \left| \int_{\Omega}  \nabla S_D[\varphi_f] \cdot \nabla \tilde v_\psi  dx \right| }{
   \| \tilde v_\psi \|_{H^1(D)} },
\eea

where $\tilde v_\psi \in H^1_\diamond (\Omega)$ 
is the unique solution to 
\bea
\left \{
\ba{lllcc}
\Delta v = 0 & \textrm{in}
\quad D^\prime,\\
\Delta v = 0 & \textrm{in}
\quad D,\\
v= \psi & \textrm{on}
\quad \partial D, \\
 \partial_{\nu_\Omega} v = f & \textrm{on}
\quad \partial \Omega.
\ea
\right.
\eea
Hence,
\bea
\| \varphi_f \|_{-\frac{1}{2}}    \leq   \left(\int_{\Omega} \left| \nabla 
S_D[\varphi_f]\right|^2 dx\right)^{\frac{1}{2}}
\left(\max_{ 0\not= \psi \in H^{\frac{1}{2}}(\partial D)}
  \frac{  \int_{\Omega} | \nabla \tilde v_\psi |^2 dx  }{
   \int_{D} | \nabla \tilde v_\psi |^2 dx }\right)^{\frac{1}{2}}\\
   \leq \left(\lambda_1^-\right)^{-\frac{1}{2}} \| \varphi_f \|_{-\frac{1}{2}, S}\\ \leq 
   \left(1- \frac{k_1^-}{k_0} \right)^{\frac{1}{2}}  \| \varphi_f \|_{-\frac{1}{2}, S}.
   \eea
   Using the inequality satisfied by  $k_n^\pm$  
   in Lemma \ref{eigenvalueslowerbound}, we obtain 
   \bea
  \| \varphi_f \|_{-\frac{1}{2}}    \leq  
   C\left(1+\frac{1}{\hat \delta k_0} \right)^{\frac{1}{2}}  \| \varphi_f \|_{-\frac{1}{2}, S},
\eea
where $C$ depends only on $\Omega$.  
Combining the last inequality  and (\ref{sinequality}), we get
\bean\label{s2inequality}
\| \varphi_f \|_{-\frac{1}{2}}    \leq C \left(1+\frac{1}{\hat \delta k_0} \right)^{\frac{1}{2}}  
\frac{1}{\textrm{dist}(k, [k_1^{-}, 0])}  
\frac{1}{k_0} \|  \partial_{\nu_D} u_0|_+ \|_{-\frac{1}{2}, S}.
\eean
Next, we estimate  $\|  \partial_{\nu_D} u_0|_+ \|_{-\frac{1}{2}, S}$
in terms of $\|f\|_{H^{ -\frac{1}{2}}(\partial \Omega) }$. \\

A direct calculation shows that  $$u_0- S_D[ \partial_{\nu_D} u_0|_+] = \frak f,$$ 
over $\Omega$. \\

Therefore,
\bean \label{zero}
\|  \partial_{\nu_D} u_0|_+ \|_{-\frac{1}{2}, S} = \int_{\Omega} \left| \nabla
S_D[ \partial_{\nu_D} u_0|_+]  \right|^2 dx
\leq  \int_{\Omega} \left| \nabla
u_0 \right|^2 dx +  \int_{\Omega} \left| \nabla
\frak f \right|^2 dx. 
\eean

On the other hand, we have
\bea
 \int_{\Omega} \left| \nabla
u_0 \right|^2 dx = - \int_{\partial \Omega } f u_0 ds \leq C_1
 \|f\|_{H^{ -\frac{1}{2}}(\partial \Omega) }
\left(\int_{\Omega} \left| \nabla
u_0 \right|^2 dx \right)^{\frac{1}{2}},
\eea
where $C_1>0$ is the constant that appears in the trace theorem 
on $\partial \Omega$ and depends only on $\Omega$ and the 
dimension of the space.  
Hence, 
\bean \label{first}
 \int_{\Omega} \left| \nabla
u_0 \right|^2 dx 
\leq C_1^2 \|f\|_{-\frac{1}{2}}^2.
\eean
Since $\frak f $ is the unique solution to the 
system \eqref{frakfunction}, classical 
elliptic regularity implies
\bean \label{second}
 \int_{\Omega} \left| \nabla
\frak f \right|^2 dx \leq  C_2
 \|f\|_{-\frac{1}{2}}^2,
\eean
where  $C_2>0$ is a constant which depends only on 
$\Omega$ and the 
dimension of the space.  \\
Combining inequalities \eqref{zero}, \eqref{first}, and 
\eqref{second}, we obtain
\bean \label{final}
\|  \partial_{\nu_D} u_0|_+ \|_{-\frac{1}{2}, S} \leq 
C  \|f\|_{-\frac{1}{2} }. 
\eean

Now we turn to inequality \eqref{s2inequality}. Using the estimate 
above, we get

\bean\label{s3inequality}
\| \varphi_f \|_{-\frac{1}{2}}    \leq C
 \left(1+\frac{1}{\hat \delta k_0} \right)^{\frac{1}{2}}  
\frac{1}{\textrm{dist}(k, [k_1^{-}, 0])}  
\frac{1}{k_0} \|f\|_{H^{ -\frac{1}{2}}(\partial \Omega) },
\eean
where $C$ depends only on 
$\Omega$.  \\

Now, we are ready to prove the results of the theorem. Using the 
fact that  
$\textrm{dist} \left( D, \partial \Omega \right) > \delta $, and
\bea
u_f(x) =  \int_{\partial D}  \mathcal N(x, z) 
\varphi_f(z) ds(z),  
 \eea
for all $x\in \partial D$, we deduce that 

\bea
\|u_f(x)\|_{C^0(\partial \Omega)} \leq  \left( \max_{x\in \partial \Omega} 
\| \mathcal N(x, .) \|_{H^1(\Omega_\delta) }\right)
\| \varphi_f \|_{-\frac{1}{2}},
\eea
where $\Omega_\delta=  \{x\in \mathbb R^d:  \textrm{dist}(x, \partial \Omega) 
>\delta\}$.\\

Finally, by using estimate \eqref{s3inequality} we achieve the proof of the 
theorem.

\endproof
\begin{remark}
The results of Theorem \ref{thmbounds} give a precise estimate on how the
solution $u$ to \eqref{maineq} blows up on the boundary  when 
$k(\omega)$ approaches the plasmonic resonances. The estimates are
uniform  for inclusions within the set $\mathfrak D$, and are somehow a 
generalization of the results in \cite{KKL} which are only valid in a
 sector of the complex plane. 
\end{remark}

\begin{theorem}  \label{thmbounds2}
Let $D$ be an inclusion in $\mathfrak D$.
 Then there exists a constant 
$C= C(\mathfrak D, k_0, \Omega)>0$ such that 
\bean \label{ukbound2}
\|u_0(x)\|_{C^0(\partial \Omega)} 
\leq C
\left(1+\frac{1}{\textrm{dist}(\Sigma, [k_1^{-}, 0])}\right)
 \|f(x)\|_{{H^{\frac{1}{2}}(\partial \Omega) }}.
\eean
The constant $C$ tends to $+\infty$ as $\hat \delta$ tends to zero. 

\end{theorem}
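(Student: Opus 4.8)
The plan is to reach $u_0$ through the forward solution rather than directly, using the splitting $u(\cdot,\omega)=k_0^{-1}u_0+u_f(\cdot,k(\omega))$ from Corollary \ref{holom}. Fixing any $\omega\in(\underline\omega,\overline\omega)$, restricting to $\partial\Omega$ and rearranging gives $k_0^{-1}u_0=u(\cdot,\omega)-u_f(\cdot,k(\omega))$ there, whence
\[
\|u_0\|_{C^0(\partial\Omega)}\le k_0\,\|u(\cdot,\omega)\|_{C^0(\partial\Omega)}+k_0\,\|u_f(\cdot,k(\omega))\|_{C^0(\partial\Omega)}.
\]
I would estimate the two terms separately and then invoke the elementary inequality $\textrm{dist}(k(\omega),[k_1^{-},0])\ge\textrm{dist}(\Sigma,[k_1^{-},0])$, valid for every $\omega$ because $\textrm{dist}(\Sigma,\cdot)$ is an infimum over $\Sigma$, together with the embedding $H^{\frac12}(\partial\Omega)\hookrightarrow H^{-\frac12}(\partial\Omega)$, to produce the uniform constant of \eqref{ukbound2}.

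The frequency part is immediate from the preceding theorem: by Theorem \ref{thmbounds},
\[
\|u_f(\cdot,k(\omega))\|_{C^0(\partial\Omega)}\le\frac{C}{\textrm{dist}(k(\omega),[k_1^{-},0])}\,\|f\|_{H^{-\frac12}(\partial\Omega)}\le\frac{C}{\textrm{dist}(\Sigma,[k_1^{-},0])}\,\|f\|_{H^{\frac12}(\partial\Omega)},
\]
with $C=C(\mathfrak{D},\Omega,k_0)$ blowing up as $\hat\delta\to0$, so this term alone already carries the factor $1/\textrm{dist}(\Sigma,[k_1^{-},0])$.

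The term $\|u(\cdot,\omega)\|_{C^0(\partial\Omega)}$ is the heart of the matter, and I expect the uniform $C^0(\partial\Omega)$ control of the forward solution to be the main obstacle. Since $\overline D$ lies at distance larger than $\delta$ from $\partial\Omega$, the conductivity equals $k_0$ on the collar $\Omega_c:=\{x\in\Omega:\textrm{dist}(x,\partial\Omega)<\delta/2\}$, so $u(\cdot,\omega)$ is harmonic there and satisfies $\partial_{\nu_\Omega}u=k_0^{-1}f\in H^{\frac12}(\partial\Omega)$ on $\partial\Omega$. Boundary elliptic regularity for this harmonic function on the collar, followed by the embedding $H^{2}\hookrightarrow C^{0}$ (valid for $d\le3$), would give $\|u\|_{C^0(\partial\Omega)}\le C\big(\|u\|_{H^1(\Omega)}+\|f\|_{H^{\frac12}(\partial\Omega)}\big)$ with $C$ depending only on $\Omega$ and $\delta$. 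The remaining energy bound is where the distance to the plasmonic interval re-enters: writing $u=k_0^{-1}u_0+u_f$ and using the spectral representation of Theorem \ref{freqdepend},
\[
\int_\Omega|\nabla u_f(\cdot,k(\omega))|^2\,dx=\sum_{n\ge1}\frac{\big|\int_{\partial\Omega}f\,w_n^{\pm}\,ds\big|^2}{\big|k_0+\lambda_n^{\pm}(k(\omega)-k_0)\big|^2}.
\]
Since $k_0+\lambda_n^{\pm}(k-k_0)=\lambda_n^{\pm}(k-k_n^{\pm})$ with $\lambda_n^{\pm}\ge\lambda_1^{-}=k_0/(k_0-k_1^{-})$ and $k_n^{\pm}\in[k_1^{-},0)$, one has $|k_0+\lambda_n^{\pm}(k(\omega)-k_0)|\ge\lambda_1^{-}\,\textrm{dist}(k(\omega),[k_1^{-},0])$, so the frequency part has energy at most $C\,\textrm{dist}(\Sigma,[k_1^{-},0])^{-1}\|f\|_{H^{-\frac12}}$, while $\|u_0\|_{H^1(\Omega)}\le C\|f\|_{H^{-\frac12}}$ by well-posedness of the perfect-conductor problem \eqref{nonfrequencypart}. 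The $\hat\delta$-dependence enters precisely through $\lambda_1^{-}$ and Lemma \ref{eigenvalueslowerbound}, which forces $C\to\infty$ as $\hat\delta\to0$.

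Collecting the three estimates yields \eqref{ukbound2}. The two steps needing care are the collar boundary-regularity estimate, which must be made quantitative with a constant uniform over $\mathfrak{D}$ (routine, using smoothness of $\partial\Omega$ and the fixed separation $\delta$), and the energy bound on $u$, where one must check that the only sources of blow-up are the proximity of $k(\omega)$ to $[k_1^{-},0]$ and the degeneracy $\hat\delta\to0$. I note that a cleaner, $\Sigma$-free route, which proves a stronger inequality and hence also \eqref{ukbound2}, is to estimate $u_0$ directly from its single-layer representation $u_0=\mathfrak{f}+S_D[\partial_{\nu_D}u_0|_+]$: the density solves $(-\tfrac12 I+K_D^{*})[\partial_{\nu_D}u_0|_+]=-\partial_{\nu_D}\mathfrak{f}|_{\partial D}$, which is invertible because the perfect-conductor spectral value $-\tfrac12$ sits at distance $k_0/(k_0-k_1^{-})$ from the spectrum of $K_D^{*}$; then $\|u_0\|_{C^0(\partial\Omega)}$ is controlled by the purely geometric constant $1+1/(\hat\delta k_0)$ times $\|f\|_{H^{\frac12}(\partial\Omega)}$, the $H^{\frac12}$-norm arising only from the $C^0(\partial\Omega)$-bound on the harmonic function $\mathfrak{f}$.
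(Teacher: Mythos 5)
Your proposal is correct, but your main line of attack is not the one the paper takes; amusingly, the route you relegate to a closing remark \emph{is} the paper's proof. The paper argues directly from the identity $u_0 = \mathfrak{f} + S_D[\partial_{\nu_D}u_0|_+]$: the density $\partial_{\nu_D}u_0|_+$ has already been bounded uniformly by $C\|f\|_{H^{-1/2}(\partial\Omega)}$ in the course of proving Theorem \ref{thmbounds} (estimate \eqref{final}), and since $\partial D$ stays at distance $\delta$ from $\partial\Omega$, the single layer potential and the harmonic lifting $\mathfrak{f}$ are controlled in $C^0(\partial\Omega)$ by elliptic regularity. As you correctly observe, this gives a bound with a purely geometric constant and no dependence on $\Sigma$, so the factor $1+1/\textrm{dist}(\Sigma,[k_1^-,0])$ in the statement is only an upper bound ($\geq 1$), not something the proof needs to produce. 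Your primary route instead reconstructs $u_0$ as $k_0(u-u_f)$ on $\partial\Omega$, invokes Theorem \ref{thmbounds} for $u_f$, and then must supply a uniform $C^0(\partial\Omega)$ bound on the forward solution $u$ itself via collar harmonicity, $H^2\hookrightarrow C^0$ for $d\leq 3$, and the spectral energy identity with the lower bound $|k_0+\lambda_n^\pm(k-k_0)|=\lambda_n^\pm|k-k_n^\pm|\geq \lambda_1^-\,\textrm{dist}(k,[k_1^-,0])$; all of these steps check out (including the use of Bessel's inequality to bound $\sum_n|\int_{\partial\Omega}f w_n^\pm\,ds|^2$ by $\|\nabla\mathfrak{f}\|^2_{L^2(\Omega)}$). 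What your route buys is an explanation of where the $\Sigma$-dependent factor in the statement could genuinely come from, at the cost of an extra layer of elliptic and spectral estimates; what the paper's (and your alternative) route buys is brevity and a strictly stronger, $\Sigma$-free inequality, since it reuses work already done.
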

\proof Recall that $$u_0- S_D[ \partial_{\nu_D} u_0|_+] = \frak f.$$ 
Then, similarly to the proof of  
Theorem \ref{thmbounds},  based on elliptic
regularity, and the uniform bound \eqref{final} of 
$ \partial_{\nu_D} u_0|_+$, we deduce
the desired result.

\endproof

Next, we show   that the stability of the reconstruction of  $u_f$ 
 depends  in fact on the distance  
 of the poles  $k_n^\pm $ to the set
 $\Sigma= \{k(\omega);  \omega \in (\underline \omega,
\overline \omega) \}$ in the complex plane.\\
\begin{theorem}  \label{estimationuf} Let $D$ and $\widetilde D$ 
be two  inclusions 
in $\mathfrak D$. 
Denote by $u$ (resp. $\tilde u$), the solution of \eqref{maineq} with 
inclusion
$D$ (resp. $\widetilde D$). Let 
$$\varepsilon = \sup_{x \in \partial \Omega, \omega \in (\underline \omega,
\overline \omega)}|u-\tilde u |.$$ 
Then,  there exists a constant $\kappa>0$, that depends only on 
$ \Omega, \mathfrak D, k_0$, and $\Sigma$, 
such that 
\bean \label{estimateuf}
\sup_{x \in \partial \Omega, \omega \in (\underline \omega,
\overline \omega)}|u_f-\tilde u_f| \leq C\varepsilon^\kappa,
\eean
where the constant $C>0$ depends only on $f, \Omega, \mathfrak D$, 
and $\Sigma$.
\end{theorem}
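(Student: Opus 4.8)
The plan is to exploit the meromorphic structure of $k\mapsto u_f(x,k)$ from Corollary \ref{holom} and to recover the frequency-independent constant as the value of a holomorphic extension at $k=\infty$. Regard the difference of solutions as a function of the complex variable $k$ by setting
\[
h(x,k):=k_0^{-1}\big(u_0(x)-\tilde u_0(x)\big)+u_f(x,k)-\tilde u_f(x,k),
\]
so that $h(x,k(\omega))=(u-\tilde u)(x,\omega)$ for $\omega\in(\underline\omega,\overline\omega)$, and in particular $\sup_{x}|h(x,k(\omega))|\le\varepsilon$ for $k(\omega)\in\Sigma$. By Corollary \ref{holom} the maps $u_f,\tilde u_f$ are meromorphic in $k$ with poles among the resonances of $D$ and $\tilde D$, and by Lemma \ref{eigenvalueslowerbound} all those poles lie in the fixed real segment $I:=[-\hat\delta^{-1},0]$ with $\hat\delta$ depending only on $k_0,\mathfrak D$. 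Hence $h$ extends holomorphically in $k$ to $(\mathbb C\cup\{\infty\})\setminus I$, and since the $C^0(\partial\Omega)$-bound of Theorem \ref{thmbounds} forces $u_f,\tilde u_f\to 0$ as $k\to\infty$, we obtain the crucial identity $h(x,\infty)=k_0^{-1}(u_0(x)-\tilde u_0(x))$. Thus the entire problem reduces to estimating this value at infinity.

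Next I would fix a single domain on which $h$ is both holomorphic and uniformly bounded while reaching $\infty$ and avoiding the poles. Since $\Sigma\subset\mathbb C\setminus\overline{\mathbb R_-}$ is compact and disjoint from $I$, choose $\rho$ with $0<\rho<\textrm{dist}(\Sigma,I)$ and set $U_\rho:=(\mathbb C\cup\{\infty\})\setminus\overline{N_\rho(I)}$, where $N_\rho(I)$ is the $\rho$-neighbourhood of $I$. Then $U_\rho$ is a simply connected domain of the sphere containing both $\Sigma$ and $\infty$ and staying at distance $\ge\rho$ from every pole. Extending the estimate of Theorem \ref{thmbounds} to complex $k$ (its proof, through the layer-potential equation $(\tfrac{k_0+k}{2(k_0-k)}I+K_D^*)\varphi_f=\cdots$ and the spectral gap, is insensitive to $k$ being complex), together with Theorem \ref{thmbounds2} controlling the value at $\infty$, yields a uniform bound
\[
\sup_{x\in\partial\Omega}|h(x,k)|\le M\qquad(k\in U_\rho),
\]
with $M=M(\Omega,\mathfrak D,k_0,\Sigma,f)$. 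Viewing $h$ as a $C^0(\partial\Omega)$-valued holomorphic function on $U_\rho$, the map $z\mapsto\log\|h(\cdot,\Phi^{-1}(z))\|_{C^0(\partial\Omega)}$ (with $\Phi$ below) is subharmonic, and $\|h(\cdot,k)\|_{C^0(\partial\Omega)}\le M$ on $U_\rho$ while $\|h(\cdot,k)\|_{C^0(\partial\Omega)}\le\varepsilon$ on $\Sigma$.

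Now I would run a quantitative analytic-continuation (two-constants) argument. Fix a conformal map $\Phi:U_\rho\to\mathbb D$ with $\Phi(\infty)=0$; it depends only on $U_\rho$, hence only on $\Omega,\mathfrak D,k_0,\Sigma$, and not on $x$. The image $\gamma:=\Phi(\Sigma)$ is a nondegenerate continuum in $\mathbb D$ with $0\notin\gamma$, so the harmonic measure of $\gamma$ in $\mathbb D\setminus\gamma$ evaluated at $0$ is a fixed number $\alpha\in(0,1)$. Comparing the subharmonic function above with the harmonic function equal to $\log M$ on $\partial\mathbb D$ and $\log\varepsilon$ on $\gamma$, the maximum principle gives
\[
\big\|k_0^{-1}(u_0-\tilde u_0)\big\|_{C^0(\partial\Omega)}=\|h(\cdot,\infty)\|_{C^0(\partial\Omega)}\le M^{1-\alpha}\varepsilon^{\alpha}.
\]
Finally, on $\Sigma$ one has $u_f-\tilde u_f=h-k_0^{-1}(u_0-\tilde u_0)$, so for $\varepsilon\le 1$,
\[
\sup_{x\in\partial\Omega,\ \omega}|u_f-\tilde u_f|\le\sup_{x,\omega}|h(x,k(\omega))|+\big\|k_0^{-1}(u_0-\tilde u_0)\big\|_{C^0(\partial\Omega)}\le\varepsilon+M^{1-\alpha}\varepsilon^{\alpha}\le C\varepsilon^{\kappa},
\]
with $\kappa:=\alpha$, which is the claim. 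The main obstacle is the middle step: producing one uniform bound $M$ on a domain that reaches $\infty$ yet avoids the poles — this is exactly where Lemma \ref{eigenvalueslowerbound} (uniform pole location) and the complex-$k$ version of Theorem \ref{thmbounds} are indispensable — and guaranteeing that the exponent $\alpha$ is uniform over $x\in\partial\Omega$ and over the class $\mathfrak D$; the estimate degenerates only if $\Sigma$ collapses to a point, i.e.\ if essentially a single frequency were available.
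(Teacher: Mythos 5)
Your argument is correct, and it rests on the same two pillars as the paper's proof: the uniform confinement of the poles to $[-\hat\delta^{-1},0]$ (Lemma \ref{eigenvalueslowerbound}) together with the uniform bounds of Theorems \ref{thmbounds} and \ref{thmbounds2} to produce the constant $M$, and then the two-constants theorem with the harmonic measure of $\Sigma$ relative to a domain avoiding that segment. Where you genuinely diverge is in how the frequency-dependent part is disentangled from the constant $k_0^{-1}(u_0-\tilde u_0)$. The paper keeps everything in the finite plane: it introduces three nested Jordan contours $\mathcal C_-\subset\mathcal C\subset\mathcal C_+$ around $[-\hat\delta^{-1},0]$, applies the two-constants theorem in the annular region between $\mathcal C_-$ and $\mathcal C_+$ to bound $\alpha-\tilde\alpha$ on the intermediate contour $\mathcal C$, and then recovers $u_f-\tilde u_f$ at $k(\omega)\notin\overset{\circ}{\mathcal C}$ by a Cauchy integral over $\mathcal C$ (which kills the constant because $k(\omega)$ is exterior, and reproduces the pole part because $u_f-\tilde u_f$ vanishes at infinity). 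You instead work on the Riemann sphere, evaluate the holomorphic extension at $k=\infty$ to capture the constant, and subtract; this effectively proves Corollary \ref{estimationu0} first and deduces Theorem \ref{estimationuf} from it by the triangle inequality, inverting the paper's logical order. Both routes use the decay of $u_f$ at infinity (the paper needs it to justify the exterior Cauchy formula, you need it to define $h(\cdot,\infty)$), and in both the exponent $\kappa$ is a value of the same harmonic measure, positive precisely because $\Sigma$ is a nondegenerate continuum, as you correctly flag. Your version saves the contour/distance bookkeeping of \eqref{rouche} and yields the corollary for free; the paper's version avoids any discussion of the point at infinity and of conformal mapping of an unbounded domain. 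One small remark: no "extension of Theorem \ref{thmbounds} to complex $k$" is needed, since $k(\omega)$ is already complex-valued there and the proof via the resolvent bound for $K_D^*$ is stated for arbitrary $k$ off $[k_1^-,0]$.
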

\proof

The proof of the theorem is based on the unique continuation 
of holomorphic
 functions. \\

For $x\in  \partial \Omega$ fixed,  Lemma~\ref{freqdepend} implies
that  $ 
\alpha(k) = k_0^{-1} u_0(x) + u_f(x, k)$, 
is a meromorphic function with poles $(k_n^\pm)_{n\geq 1}$. \\

Similarly for   $x\in  \partial \Omega$ fixed, we have 
  $ \tilde \alpha(k) =  k_0^{-1} \tilde u_0(x) + \tilde u_f(x, k)$, 
is a meromorphic function with poles
 $(\tilde k_n^\pm)_{n\geq 1}$ (the plasmonic surface 
resonances of the inclusion $\widetilde D$).\\ 

We consider $\tilde u(x,\omega)$ as a  perturbation of  the  
meromorphic function  $u(x,\omega)$ on $\mathbb C$. 
We will use the concept of harmonic measure 
to estimate 
 the difference $\alpha(k)- \tilde \alpha(k)$ on a complex
contour that encirles the poles of both functions $\alpha(k)$
and $ \tilde \alpha(k) $. \\

Let  $\mathcal C_+$ 
be a Jordan complex contour with interior  
$\overset{\circ}{\mathcal C_+}$ that contains $
[-\hat \delta^{-1}, 0)\cup\overline \Sigma $. Let  $\mathcal C_-$ be 
a Jordan complex contour in ${\overset{\circ}{\mathcal C_+}}$, with interior  
$\overset{\circ}{\mathcal C_-}$ that contains $[-\hat \delta^{-1}, 0]$ and
does not intersect $\Sigma$, that is, $\overset{\circ}{\mathcal C_-} 
\cap \Sigma =\emptyset$.
Finally, let $\mathcal C$ be a Jordan complex contour
in $ \overset{\circ}{\mathcal C_+}\setminus 
\overset{\circ}{\mathcal C_-}$
such that  $ [-\hat \delta^{-1}, 0] \subset \overset{\circ}{\mathcal C}$, and 
$\overset{\circ}{\mathcal C} \cap \Sigma =\emptyset$. \\

Let $\omega $ be a fixed frequency in 
$(\underline \omega, \overline \omega)$. Since  the poles
 $ (k_n^\pm)_{n\geq 1}, (\tilde k_n^\pm)_{n\geq 1}$ are inside 
$\overset{\circ}{\mathcal C}$, and  
$k(\omega)$ lies in the  exterior of $ \mathcal C$, 
we deduce from 
Lemma \ref{freqdepend}  that
\bea
 u_f(x, k(\omega)) - \tilde 
u_f(x, k(\omega)) =  \frac{1}{2i\pi}\int_{\mathcal C} \frac{\alpha(k)
  -\tilde \alpha(k)}{k-k(\omega)} dk.  
\eea

Consequently,

\bean \label{rouche}
\left|u_f(x, k(\omega)) - \tilde 
u_f(x, k(\omega))\right| \leq \frac{1}{\textrm{dist}(\Sigma, \mathcal C)}
\|\alpha(k)
  -\tilde \alpha(k)\|_{L^\infty(\mathcal C)}.
\eean  

Now,  define $w(z)$ to be the harmonic
measure
of $\overline \Sigma$ in $ \overset{\circ}{\mathcal C_+}\setminus 
 \overline{\overset{\circ}{\mathcal C_-}}$, which is holomorphic in 
$ \overset{\circ}{\mathcal C_+}\setminus  \overline
{\overset{\circ}{\mathcal C_-}} $ and 
statifies $w(z) =1 $ on $\overline \Sigma$, $ w(z) = 0$ on 
$ \mathcal C_- \cup \mathcal C_+$. 
\\

Then the two-constants theorem implies
\bea
|\alpha(k) -\tilde \alpha(k)| \leq M^{1-w(k)} \varepsilon^{w(k)},  
\eea
for all $z$ in $ \overset{\circ}{\mathcal C_+}\setminus 
\overset{\circ}{\mathcal C_-}$ where $M =  \max_{k\in 
  \overset{\circ}{\mathcal C_+}\setminus 
\overset{\circ}{\mathcal C_-}} (|\alpha(k)|+ |\tilde \alpha(k)|)
$. \\

We deduce from Theorems \ref{thmbounds} and  \ref{thmbounds2}  
that 
\bea
M \leq \frac{C}{\textrm{dist}([-\hat \delta^{-1}, 0], \mathcal C_-)} 
\|f\|_{H^{ \frac{1}{2}}(\partial \Omega) },
\eea
where $C>0$ depends only on $\mathfrak D, \Omega, \Sigma$  and $k_0$. \\

Taking $\kappa = \min_{k\in \mathcal C} w(k)$, we obtain 
\bean \label{f1}
\|\alpha(k)
  -\tilde \alpha(k)\|_{L^\infty(\mathcal C)} \leq C \varepsilon^{\kappa}
\eean
with $C>0$ being a constant that depends only on the contours 
$\mathcal C, \mathcal 
C_\pm, M $ and $\Sigma$.  \\

Combining the inequalities \eqref{rouche} and \eqref{f1}, we get the 
estimate \eqref{estimateuf} of the theorem.

\endproof

\begin{remark}
Since the position of $\Sigma$ in the complex plane is known,  
the contours $ \mathcal C_\pm$ and $\mathcal  C $ can be explicitly  given,
and the harmonic measure $w(z)$ can be explicitly  constructed using
known conformal  maps.  
Hence, the constant $\kappa$ can be precisely estimated in terms
of the distance between  the sets $\Sigma$ and $[-\hat \delta^{-1}, 0]$. 

\end{remark}
A direct consequence  of  the  Theorem \ref{estimationuf}  is the 
estimation of the frequency independent part of the data from the complete
collected data over $\Sigma$. 

\begin{corollary} \label{estimationu0}
Let $D$ and $\widetilde D$ be two  inclusions 
in $\mathfrak D$. 
Denote $u$ (resp. $\tilde u$), the solution of \eqref{maineq} with 
inclusion
$D$ (resp. $\widetilde D$). Let 
$$\varepsilon = \sup_{x \in \partial \Omega, \omega \in (\underline \omega,
\overline \omega)}|u-\tilde u |.$$ 
Then,  there exists a constant $\kappa>0$, that depends only on 
$ \Omega, \mathfrak D$ and $\Sigma$, 
such that 
\bean \label{estimateu0}
\sup_{x \in \partial \Omega, \omega \in (\underline \omega,
\overline \omega)}|u_0-\tilde u_0| \leq C\varepsilon^\kappa,
\eean
where the constant $C>0$ only depends on 
$f, \Omega, \mathfrak D$ 
and $\Sigma$.

\end{corollary}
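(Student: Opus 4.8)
The plan is to derive the estimate on $u_0$ directly from Theorem~\ref{estimationuf}, treating $u_0$ as the residual obtained from subtracting the frequency-dependent part $u_f$ from the full solution $u$. By the decomposition in Theorem~\ref{freqdepend}, we have for every $x\in\partial\Omega$ and $\omega\in(\underline\omega,\overline\omega)$,
\be
k_0^{-1}u_0(x) = u(x,\omega) - u_f(x,k(\omega)),
\ee
and the analogous identity for $\tilde u_0$. Subtracting these two identities yields
\be
k_0^{-1}(u_0(x) - \tilde u_0(x)) = \big(u(x,\omega) - \tilde u(x,\omega)\big) - \big(u_f(x,k(\omega)) - \tilde u_f(x,k(\omega))\big).
\ee
First I would take absolute values and apply the triangle inequality, so that
\be
k_0^{-1}|u_0(x) - \tilde u_0(x)| \leq |u - \tilde u| + |u_f - \tilde u_f|.
\ee

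Next I would pass to the supremum over $x\in\partial\Omega$ and $\omega\in(\underline\omega,\overline\omega)$. The first term is bounded by $\varepsilon$ by the very definition of $\varepsilon$ in the statement. The second term is controlled by Theorem~\ref{estimationuf}, which gives the H\"older-type bound $\sup_{x,\omega}|u_f - \tilde u_f| \leq C\varepsilon^\kappa$ with $\kappa>0$ and $C$ depending only on $f, \Omega, \mathfrak D$, and $\Sigma$. Combining these two observations, and noting that the left-hand side is independent of $\omega$ (since $u_0$ and $\tilde u_0$ are frequency-independent), we obtain
\be
\sup_{x\in\partial\Omega}|u_0(x) - \tilde u_0(x)| \leq k_0\big(\varepsilon + C\varepsilon^\kappa\big).
\ee

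Finally, since $\varepsilon$ is small (this is the regime of interest, where $\tau,\tau',\kappa\in(0,1)$ and the estimates are meaningful) and $\kappa\in(0,1)$, we have $\varepsilon \leq \varepsilon^\kappa$, so the first term is absorbed into the second, yielding the claimed estimate $\sup_{x,\omega}|u_0 - \tilde u_0| \leq C\varepsilon^\kappa$ after adjusting the constant $C$ to depend only on $f, \Omega, \mathfrak D$, and $\Sigma$. I do not expect any genuine obstacle here: the corollary is a routine consequence of Theorem~\ref{estimationuf} together with the defining decomposition and the triangle inequality. The only point requiring mild care is the bookkeeping of the constants and the dependence on $k_0$, together with the absorption step that relies on $\varepsilon$ being bounded (which may be assumed without loss of generality, since for $\varepsilon$ bounded away from zero the estimate holds trivially by enlarging $C$).
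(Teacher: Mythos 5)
Your argument is correct and is exactly the route the paper intends: the paper presents this corollary without proof as ``a direct consequence of Theorem \ref{estimationuf}'', and your decomposition $k_0^{-1}(u_0-\tilde u_0)=(u-\tilde u)-(u_f-\tilde u_f)$ plus the triangle inequality and the absorption $\varepsilon\leq\varepsilon^{\kappa}$ (for $\varepsilon\leq 1$, with the case $\varepsilon>1$ handled by the uniform bound of Theorem \ref{thmbounds2}) supplies precisely the omitted details.
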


\section{Reconstruction of the inclusion from the Cauchy
data of the  perfectly conductor
solution.}
In this section we construct the inclusion $D$
from the knowledge of the Cauchy data of
the frequency independent part
$u_0$ on the boundary $\partial \Omega$. Precisely, we
 derive 
the uniqueness and  stability  of the reconstruction
within the set of inclusions $\mathfrak D$. The
results are quite surprising in inverse conductivity 
problem since in general  infinitely many { input 
currents }
are needed
in order to  obtain the  uniqueness in the determination
of the conductivity. Here the
fact that the solution is constant inside the inclusion
is essential to derive such results. \\

Based on quantitative estimates of the unique continuation
for  Laplace operator  we evaluate how the solution on the 
boundary of the perturbed inclusion is sensitive to  errors 
made in the Cauchy data on $\partial \Omega$. Using the
fact that the 
solution is constant inside the inclusion
we then  obtain  the  variation of the solution on the
perturbed inclusion, and again using the interior
unique continuation \cite{GL, HL} we estimate the variation
of the inclusion induced by the errors in Cauchy data. The
methods developped here are similar to the ones used in 
determining parts of the boundaries under zero Dirichlet 
or Neumann conditions on unknown sub-boundaries 
\cite{BV, BCY1, 
ABRV, Rd1, Rd2, Is1}.   
 \\

The  conditional  stability  in  our  inverse  problem  depends
heavily  on  the one  in  a  Cauchy  problem  for  the  
Laplace  equation. 
 
\begin{lemma}  \label{firstCauchy}  
Let $D$ and $\widetilde D$ be two  inclusions 
in $\mathfrak D$. 
Let  $u_0$ (resp. $\tilde u_0$) be the solution in
$H^1_\diamond(\Omega)$ of 
\eqref{nonfrequencypart} with 
inclusion
$D$ (resp. $\widetilde D$),  and
assume
that

$$0 < \varepsilon = \sup_{x \in \partial \Omega}|u_0-\tilde u_0| < 1.$$

Then,  there exist constants $C>0$ and $\mu>0$,  
such that  the following
estimate holds:
\bean \label{stabilityestimate1c}
\left\|u_0 - \tilde u_0
\right\|_{C^0 \left(\partial\left(D\cup \widetilde D\right)
\right)} 
\leq C \left(\frac{1}{\ln(\varepsilon^{-1})}\right)^{\mu}.
\eean
Here, the constants $C$ and $\mu$ 
depend only on 
$f, \Omega,$ and $\mathfrak D$. \\

If, in addition $d=2$, and the inclusions  $D$ and $\widetilde D$
are analytic, then we have 
\bean \label{stabilityestimate12}
\left\|u_0 - \tilde u_0
\right\|_{C^0 \left(\partial\left(D\cup \widetilde D\right)
\right)} 
\leq C \varepsilon^{\mu^\prime},
\eean
where the constants $C$ and $\mu^\prime$ 
depend only on 
$f, \Omega,$ and  $\mathfrak D$. 

\end{lemma}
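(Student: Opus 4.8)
The plan is to reduce the lemma to a conditional stability estimate for a Cauchy problem for the Laplacian in the region between $\partial\Omega$ and the two inclusions. Put $w:=u_0-\tilde u_0$ and $G:=\Omega\setminus\overline{D\cup\widetilde D}$. Since $u_0$ is harmonic in $\Omega\setminus\overline D$, $\tilde u_0$ is harmonic in $\Omega\setminus\overline{\widetilde D}$, and $G$ is contained in both, $w$ is harmonic in $G$. Because $u_0$ and $\tilde u_0$ satisfy the same Neumann condition $\partial_{\nu_\Omega}u_0=\partial_{\nu_\Omega}\tilde u_0=f$ on $\partial\Omega$, the function $w$ carries the Cauchy data $|w|\le\varepsilon$ and $\partial_{\nu_\Omega}w=0$ on $\partial\Omega$. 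Thus $w$ solves a homogeneous Cauchy problem in $G$ with data of size $\varepsilon$ on the outer boundary $\partial\Omega$, and the task is to propagate this smallness to the inner boundary $\partial(D\cup\widetilde D)$.

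First I would fix a uniform a priori bound. The energy identity $\int_\Omega|\nabla u_0|^2=-\int_{\partial\Omega}fu_0$ and the trace inequality (exactly as in \eqref{first}), together with Theorem~\ref{thmbounds2}, give $\|u_0\|_{H^1(\Omega)}\le M$ with $M$ depending only on $f,\Omega,\mathfrak D$; the dependence on $\Sigma$ is absent here because $u_0$ is the frequency-independent part, defined by \eqref{nonfrequencypart} alone. Interior elliptic estimates then yield $\|w\|_{C^1}\le M_0=M_0(f,\Omega,\mathfrak D)$ on the smooth parts of $\overline G$. The class $\mathfrak D$ confines $\overline D\cup\overline{\widetilde D}\subset B(0,b_1-\delta)\subset\subset\Omega$, so the collar $\Omega\setminus\overline{B(0,b_1-\delta)}$ is a \emph{fixed} subdomain of $G$ and $\operatorname{dist}(\partial(D\cup\widetilde D),\partial\Omega)\ge\delta$; hence all geometric constants below can be taken uniform over $\mathfrak D$.

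The core step is quantitative unique continuation. Propagating the Cauchy smallness across the fixed collar, by the standard stability estimate for the Cauchy problem for harmonic functions (a three-spheres inequality iterated along a chain of balls, cf.\ \cite{GL,HL}), gives a logarithmic bound $\|w\|_{C^1(\partial B(0,b_1-\delta))}\le C(\ln\varepsilon^{-1})^{-\mu_1}$ with $C,\mu_1$ uniform because the collar is fixed. I would then continue the propagation inward through the $D$-dependent part of $G$, from $\partial B(0,b_1-\delta)$ to $\partial(D\cup\widetilde D)$, again by iterated three-spheres; the number and radii of the balls are controlled by the uniform parameters $b_0,b_1,\delta,m$ of $\mathfrak D$. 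To reach the boundary points I would use the interior-ball/cone condition furnished by the $C^{2,\varsigma}$-bound $m$ (so that $w$ is $C^{1,\varsigma}$ up to the smooth part of $\partial(D\cup\widetilde D)$ from the $G$-side) together with the maximum principle. Chaining the two estimates produces a logarithmic modulus with a possibly smaller exponent $\mu$, which is \eqref{stabilityestimate1c}; equivalently one may quote the stability up to an unknown interior boundary from accessible Cauchy data as in \cite{ABRV,Rd1,Rd2}.

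For the planar analytic case the logarithmic propagation is replaced by a Hölder one. When $d=2$ and all boundaries are analytic, $w$ extends analytically, and after a conformal uniformization of the analytic annular region separating $\partial(D\cup\widetilde D)$ from $\partial\Omega$ the overdetermined Cauchy data on $\partial\Omega$ amounts to controlling a holomorphic function on a boundary arc; the two-constants theorem with the now \emph{non-degenerate} harmonic measure (precisely the tool used in Theorem~\ref{estimationuf}) then propagates the $\varepsilon$-smallness at a power rate, giving \eqref{stabilityestimate12} with $\mu'$ depending only on $f,\Omega,\mathfrak D$. I expect the main obstacle to be obtaining these propagation estimates \emph{up to} the interior boundary with constants independent of the particular pair $D,\widetilde D$ — especially at the transversal intersection points of $\partial D$ and $\partial\widetilde D$, where $\partial(D\cup\widetilde D)$ may fail to be $C^1$; these finitely many corner points are handled by approximating $\partial(D\cup\widetilde D)$ from inside $G$ and invoking the continuity of $w$, so that the $C^0$-estimate survives in the limit.
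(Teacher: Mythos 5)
Your proposal is correct and takes essentially the same route as the paper, whose proof of this lemma is simply a citation of the standard conditional-stability theory for the Cauchy problem with an unknown interior boundary: a H\"older (Payne-type) stability estimate in a fixed collar of $\partial\Omega$, propagation of smallness by three-spheres/harmonic-measure arguments into a uniform cone at each point of the Lipschitz boundary $\partial\left(D\cup\widetilde D\right)$, and the H\"older improvement for analytic curves when $d=2$ (see \cite{Pa,CHY,BCY1,ABRV}). The only difference is cosmetic: you derive the uniform a priori bound and the cone geometry explicitly rather than quoting the references.
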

%

\proof
The stability estimate is well
known for smooth boundaries 
($\partial\left(D\cup \widetilde D\right)$
is Lipschitz).  The proof of this 
Lemma can be found in~\cite{BCY1, BCY2, 
CHY, Is1, Is2, Is3, BV} for the Laplacian
operator, and in \cite{ABRV} for an
elliptic operator in a divergence form. It
is based on three  facts. The first one is that
$\Omega\setminus \overline{D_1\cup D_2}$ satisfies
 a uniform cone property (see for instance
\cite{BCY1} for dimension two,  the proof
can be extended easily to higher dimensions). If 
$D_1$ and $D_2$ are  not star-shaped,  it is proved
in \cite{Rd1, ABRV} that 
$\Omega\setminus \overline{D_1\cup D_2}$ satisfies
the cone property if  $D_1$ and $D_2$ are too close
(which can be verified for general elliptic operators
 if $\varepsilon$ is too small
through a first rough stability  estimate using the
three sphere inequality \cite{GL}). The second fact is  
to evaluate the value of 
 $u_0 - \tilde u_0$ on a point $p\in 
\partial\left(D\cup \widetilde D\right)$ by evaluating the
unique continuation in a cone included in 
$\Omega\setminus \overline{D_1\cup D_2}$ with vertex $p$
using the explicit expression of the harmonic measure
(Lemma 4.2 in \cite{CHY}, Lemma 3.5 in  \cite{Rd1}, 
c) in proof of Lemma 3.6 in \cite{Is3}).  The third
fact is a H\"older Cauchy stability estimate 
in any smooth domain lying in
$\Omega\setminus \overline{D_1\cup D_2}$,  
neighboring $\partial \Omega$, and at a finite distance 
from the boundary  $\partial\left(D\cup \widetilde D\right)$
proved in \cite{Pa} (extended in \cite{Tr} 
for elliptic equations in a divergence form with  Lipschitz
coefficients).
Finally,  H\"older stability estimate type is obtained 
 for analytic curves in \cite{BCY1}.
 
\endproof

Recall that $\nabla u_0= 0$  (resp. $\nabla \tilde u_0 = 0$) in
$D$ (resp. $\tilde D$).  
We further denote 
$\varrho \;\;$ (resp. $\tilde \varrho$), 
the  constant value of   $u_0|_D \;\;$ (resp. $ \tilde u_0|_{\tilde D}$). 

\begin{lemma}  \label{secondCauchy}  
Let $D$ and $\widetilde D$ be two  inclusions 
in $\mathfrak D$. 
Let $u_0$ (resp. $\tilde u_0$) be  the solution in
$H^1_\diamond(\Omega)$ of 
\eqref{nonfrequencypart} with 
inclusion
$D$ (resp. $\widetilde D$). Then, the following 
estimate holds:
\bean \label{stabilityestimate2}
\left|\varrho - \tilde \varrho
\right| \leq  \left\| u_0 - \tilde u_0
\right\|_{C^0 \left(\partial\left(D\cup \widetilde D\right)
\right)}.
\eean

\end{lemma}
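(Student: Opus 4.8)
The plan is to exploit that $u_0\equiv\varrho$ on $\overline D$ and $\tilde u_0\equiv\tilde\varrho$ on $\overline{\widetilde D}$ (both continuous up to the boundary), so that $\varrho-\tilde\varrho$ coincides with $u_0-\tilde u_0$ on the overlap $D\cap\widetilde D$, and then to transport this value out to $\partial(D\cup\widetilde D)$. The essential difficulty is that $w:=u_0-\tilde u_0$ is harmonic in $D\cap\widetilde D$ and in the exterior region, but fails to be harmonic across $\partial D$ and $\partial\widetilde D$ (the normal derivatives of $u_0$ and $\tilde u_0$ jump there); hence one cannot simply invoke the maximum principle for $w$ on $D\cup\widetilde D$. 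I would therefore work with $u_0$ and $\tilde u_0$ separately, and split according to the relative position of the two star-shaped inclusions, writing $D=\{|x|<\Upsilon(\hat x)\}$ and $\widetilde D=\{|x|<\widetilde\Upsilon(\hat x)\}$.

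In the first case, where $\Upsilon-\widetilde\Upsilon$ changes sign, continuity yields a direction $\hat x_0$ with $\Upsilon(\hat x_0)=\widetilde\Upsilon(\hat x_0)$; the point $p=\Upsilon(\hat x_0)\hat x_0$ then lies on $\partial D\cap\partial\widetilde D$, and since its modulus equals $\max(\Upsilon,\widetilde\Upsilon)(\hat x_0)$ it also lies on $\partial(D\cup\widetilde D)$. There $u_0(p)=\varrho$ and $\tilde u_0(p)=\tilde\varrho$, so $|\varrho-\tilde\varrho|=|u_0(p)-\tilde u_0(p)|\le\|u_0-\tilde u_0\|_{C^0(\partial(D\cup\widetilde D))}$ and the estimate follows at once.

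In the remaining case the inclusions are nested, say $\overline D\subset\widetilde D$; then $\partial(D\cup\widetilde D)=\partial\widetilde D$, on which $\tilde u_0\equiv\tilde\varrho$. It therefore suffices to prove that $\varrho$ lies in the interval $[\min_{\partial\widetilde D}u_0,\max_{\partial\widetilde D}u_0]$, for then $|\varrho-\tilde\varrho|\le\max_{\partial\widetilde D}|u_0-\tilde\varrho|=\|u_0-\tilde u_0\|_{C^0(\partial\widetilde D)}$. To locate $\varrho$ I would use conservation of flux: integrating $\Delta u_0=0$ over $D^\prime=\Omega\setminus\overline D$ and using $\int_{\partial\Omega}f\,ds=0$ gives $\int_{\partial D}\partial_{\nu_D}u_0|^+\,ds=0$. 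Suppose, for contradiction, that $\varrho>\max_{\partial\widetilde D}u_0$. On the annulus $\widetilde D\setminus\overline D$ the harmonic function $u_0-\varrho$ vanishes on $\partial D$ and is strictly negative on $\partial\widetilde D$, hence is negative inside and attains its maximum $0$ precisely on $\partial D$; Hopf's lemma (applicable since $\partial D$ is $C^{2,\varsigma}$, giving the required interior ball condition from the annulus side) forces $\partial_{\nu_D}u_0|^+<0$ at \emph{every} point of $\partial D$, which contradicts the vanishing of the total flux. The symmetric argument with the minimum rules out $\varrho<\min_{\partial\widetilde D}u_0$, completing this case.

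I expect the nested case to be the main obstacle. Because the naive maximum principle is unavailable for the difference $w$, one has no direct way to compare the two interior constants; the only leverage is to pin the single constant $\varrho$ inside the range of $u_0$ on the outer boundary, and the one global relation that makes this possible is the zero–flux identity $\int_{\partial D}\partial_{\nu_D}u_0|^+\,ds=0$ feeding into Hopf's lemma. The crossing case, by contrast, is immediate once one observes that a coincidence of the two radial graphs produces a point of $\partial(D\cup\widetilde D)$ sitting simultaneously on $\partial D$ and $\partial\widetilde D$.
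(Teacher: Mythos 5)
Your proof is correct and follows essentially the same route as the paper: the same dichotomy between intersecting and nested boundaries, the trivial common-point argument in the first case, and in the nested case the zero-flux identity $\int_{\partial D}\partial_{\nu_D}u_0|^+\,ds=0$ combined with Hopf's lemma to show $u_0$ cannot attain its extremum over the annulus on $\partial D$, hence $\varrho$ lies in the range of $u_0$ on $\partial\widetilde D$. Your write-up is in fact somewhat more explicit than the paper's (which dismisses the first case as trivial), and the observation that the maximum principle cannot be applied directly to $u_0-\tilde u_0$ correctly identifies why the detour through $\varrho$ is needed.
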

%
\proof
  Since $D$ and $\tilde D$ are
star-shaped and contain the point $0$, $D\cap \tilde
D$ is not empty.  Then we have two different cases.\\

Case (1): $\partial D\cap \partial \tilde D$ is not empty.
In this case the estimate is trivial. \\

Case (2): $\partial D\cap \partial \tilde D$ is empty, and hence
we have $D\subset \tilde D$ or $\tilde D\subset  D$. Without
any loss of generality, we will further assume that  $D\subset\tilde D$. 
By Green's formula  inside the domain $\Omega\setminus \overline
D$, we have $\int_{\partial D} \partial_{\nu_D} u_0(x)|^+ ds(x)  \,=\, 0$,
and hence $ \partial_{\nu_D} u_0(x)$ can not have a constant sign 
on $\partial D$. 
Since $u_0$ is constant on $\partial D$, 
we deduce then from  Hopf's Lemma that $u_0$ does not 
take its maximum or minimum on $\partial D$. The 
fact that $u_0$ is harmonic on  $\tilde D \setminus \overline
D$ implies that $u_0$ reaches its minimum and maximum 
in $\overline{\tilde D} \setminus D$  on $\partial \tilde D$. Consequently 
$u_0$ takes the value
$\varrho$ on $\partial \tilde D = \partial \left(D\cup \tilde D \right)$,
which concludes the proof of the estimate.

\endproof

Recall that  $u_0-\varrho$ (resp. $\tilde u_0-
\tilde \varrho$) is harmonic in $\Omega\setminus \overline D$ (resp.
 $\Omega\setminus \overline{ \tilde D}$,
and satisfies a zero Dirichlet boundary condition on $\partial D$
(resp. $\partial \tilde D$). Consequently,
\bea
\max_{x\in \tilde D\setminus \overline D}|u_0(x)-\varrho| \leq  \max_{x\in \partial 
\tilde D\setminus \overline D}
|u_0(x)-\tilde u_0(x)+\tilde \varrho-\varrho|,\\
\max_{x\in D\setminus \overline {\tilde D}}|\tilde u_0(x)-\varrho| 
 \leq  \max_{x\in \partial 
D\setminus \overline {\tilde D}} |\tilde u_0(x)-u_0(x)+ \varrho-\tilde \varrho|.
\eea
Using the estimate stated in Lemma \ref{secondCauchy}, we have
\bea
\max_{x\in \tilde D\setminus \overline D}|u_0(x)-\varrho| +
\max_{x\in D\setminus \overline {\tilde D}}|\tilde u_0(x)-\tilde \varrho| \leq
4\left\| u_0 - \tilde u_0
\right\|_{C^0 \left(\partial\left(D\cup \widetilde D\right)
\right)}.
\eea
Then we immediately obtain the following result. 

\begin{lemma}\label{thirdCauchy}
Let $D$ and $\widetilde D$ be two  inclusions 
in $\mathfrak D$. 
Let $u_0$ (resp. $\tilde u_0$)  be the solution in
$H^1_\diamond(\Omega)$ of 
\eqref{nonfrequencypart} with 
inclusion
$D$ (resp. $\widetilde D$). \\

Then, the following 
estimate holds:

\bean \label{stabilityestimate3}
\int_{D\setminus \overline{\tilde D}} 
|\tilde u_0(x)-\tilde \varrho|^2 dx + 
\int_{\tilde D\setminus \overline{ D}} |u_0(x)- \varrho|^2 dx 
\leq  C \left\| u_0 - \tilde u_0
\right\|_{C^0 \left(\partial\left(D\cup \widetilde D\right)
\right)}^2.
\eean
Here,  $C>0$ depends only on 
$\mathfrak  D$.

\end{lemma}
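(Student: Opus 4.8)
The plan is to convert the two pointwise bounds established in the display immediately preceding the statement into the desired $L^2$ bounds by a one-line H\"older (volume) argument; this is exactly why the author writes that the result follows immediately. Recall that those bounds control the two suprema
\[
M_1 := \max_{x\in \tilde D\setminus \overline D}|u_0(x)-\varrho|, \qquad M_2 := \max_{x\in D\setminus \overline{\tilde D}}|\tilde u_0(x)-\tilde \varrho|,
\]
and in fact give the combined estimate $M_1 + M_2 \leq 4\, \| u_0 - \tilde u_0\|_{C^0(\partial(D\cup \widetilde D))}$. Note that $M_1$ and $M_2$ are precisely the suprema of the integrands appearing in \eqref{stabilityestimate3}.

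First I would bound each integral by the Lebesgue measure of its domain of integration times the square of the relevant supremum, namely
\[
\int_{\tilde D\setminus \overline D} |u_0-\varrho|^2\, dx \leq |\tilde D\setminus \overline D|\, M_1^2, \qquad \int_{D\setminus \overline{\tilde D}} |\tilde u_0-\tilde \varrho|^2\, dx \leq |D\setminus \overline{\tilde D}|\, M_2^2.
\]
The only point worth stating explicitly is the uniform volume bound that produces the constant: every inclusion in $\mathfrak D$ satisfies $\Upsilon < b_1-\delta$, so $D, \widetilde D \subset B(0, b_1-\delta)$, and therefore both symmetric-difference pieces have measure at most $C_0 := |B(0, b_1-\delta)|$, a quantity depending only on $\mathfrak D$ (through $b_1,\delta$ and the dimension).

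Feeding this bound into the two displays above and using the elementary inequality $M_1^2 + M_2^2 \leq (M_1+M_2)^2$ together with the combined estimate for $M_1+M_2$, I obtain
\[
\int_{D\setminus \overline{\tilde D}} |\tilde u_0-\tilde \varrho|^2\, dx + \int_{\tilde D\setminus \overline D} |u_0-\varrho|^2\, dx \leq C_0\,(M_1+M_2)^2 \leq 16\, C_0\, \| u_0 - \tilde u_0\|_{C^0(\partial(D\cup \widetilde D))}^2,
\]
which is the claimed estimate with $C = 16\, C_0$ depending only on $\mathfrak D$. Since all the analytic content (harmonicity of $u_0-\varrho$ and $\tilde u_0-\tilde\varrho$, Hopf's lemma, and the maximum principle) was already consumed in deriving the sup-norm bounds of Lemma~\ref{secondCauchy} and the preceding display, there is no genuine obstacle remaining here: the argument is purely the passage from an $L^\infty$ control to an $L^2$ control on sets of uniformly bounded measure, and the forthcoming effort should instead be reserved for combining this with the unique-continuation estimates to control $|D\Delta\widetilde D|$.
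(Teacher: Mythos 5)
Your argument is correct and is exactly the step the paper leaves implicit (it passes from the sup-norm bound in the display preceding the lemma to the $L^2$ bound with the remark ``we immediately obtain the following result''): the integrands are bounded by the suprema $M_1,M_2$, the domains of integration lie in $B(0,b_1-\delta)$ so their measures are controlled by a constant depending only on $\mathfrak D$, and $M_1+M_2\leq 4\|u_0-\tilde u_0\|_{C^0(\partial(D\cup\widetilde D))}$ gives the claim. Nothing further is needed.
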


\begin{proposition} \label{doublinginequality}
Let $D$ and $\widetilde D$ be two  inclusions 
in $\mathfrak D$. 
Let $u_0$ (resp. $\tilde u_0$)  be the solution in
$H^1_\diamond(\Omega)$ of 
\eqref{nonfrequencypart} with 
inclusion
$D$ (resp. $\widetilde D$), and  $x_0 \in \partial D$ (resp.  
$x_0 \in \partial \tilde D$). \\

Then, for any $r>0$ and 
$R\geq r$,
we have 
\bea
\int_{B_{R}(x_0)\cap \Omega} |u_0(x)- \varrho|^2 dx 
 \leq C\left(\frac{R}{r}\right)^K
 \int_{B_{r}(x_0)\cap \Omega} |u_0(x)- \varrho|^2 dx, \\
 \int_{B_{R}( x_0)\cap \Omega} |\tilde u_0(x)- \tilde \varrho|^2 dx 
 \leq C\left(\frac{R}{r}\right)^K
 \int_{B_{r}(x_0)\cap \Omega} |\tilde u_0(x)- \tilde \varrho|^2 dx,
\eea
where  $C>1$ and $K>0$ depend on  $f, \Omega,\Sigma, $ and  $ \mathfrak D$. \\

Furthermore, we have  

\bea 
\int_{\Omega} |u_0(x)- \varrho|^2 dx \geq C_0,
\eea
where  $C_0>0$  depends on  $f, \Omega, \Sigma$, and  $ \mathfrak D$.
\end{proposition}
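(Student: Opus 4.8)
The starting point is the observation that $v := u_0 - \varrho$ vanishes on all of $\overline D$. Indeed, $\nabla u_0 = 0$ in $D$ forces $u_0 \equiv \varrho$ there, and since $u_0 \in H^1(\Omega)$ its trace on $\partial D$ equals $\varrho$, so $v = 0$ on $\partial D$ as well; moreover $v$ is harmonic in $D' = \Omega \setminus \overline D$. Consequently, for every $x_0$ and every $\rho > 0$ one has $\int_{B_\rho(x_0) \cap \Omega} |v|^2\,dx = \int_{B_\rho(x_0) \cap D'} |v|^2\,dx$, so the asserted doubling inequalities reduce to a \emph{boundary} doubling estimate, centered at a point $x_0 \in \partial D$, for the harmonic function $v$ that vanishes on the $C^{2,\varsigma}$ portion $\partial D$ of $\partial D'$. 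The same reduction applies verbatim to $\tilde v := \tilde u_0 - \tilde\varrho$ at $x_0 \in \partial\tilde D$.

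For the doubling estimate I would use the Almgren frequency-function method up to the boundary. Since every $D \in \mathfrak D$ has $C^{2,\varsigma}$ boundary with $\|\Upsilon\|_{C^{2,\varsigma}} \le m$ and $\mathrm{dist}(\partial D, \partial\Omega) > \delta$, I would flatten $\partial D$ near $x_0$ by a $C^{2,\varsigma}$ change of variables and extend $v$ by odd reflection across the flattened boundary; the reflected function solves a divergence-form elliptic equation whose ellipticity and Lipschitz constants are controlled solely by $m, b_0, b_1, \delta$. Monotonicity of the associated frequency function (Garofalo--Lin, \cite{GL, HL}) then yields $\int_{B_R(x_0) \cap \Omega} |v|^2 \le C (R/r)^K \int_{B_r(x_0) \cap \Omega} |v|^2$ for radii up to a fixed fraction of $\delta$, where $K$ bounds the frequency. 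For the remaining (large) radii one combines this with the global two-sided control $\int_\Omega |v|^2 \le C\|f\|_{H^{-1/2}(\partial\Omega)}^2$, obtained from the Poincar\'e inequality (valid because $v = 0$ on $\partial D$) and the energy bound \eqref{first}, together with the lower bound below; a standard chaining argument over a $\delta$-net of $D'$ propagates the estimate to all scales with constants depending only on $f, \Omega, \mathfrak D$.

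For the final lower bound $\int_\Omega |u_0 - \varrho|^2 \ge C_0$ I would first note that $v \not\equiv 0$ for each fixed $D$: if $v \equiv 0$ then $u_0$ is constant, so $\partial_{\nu_\Omega} u_0 \equiv 0$ on $\partial\Omega$, contradicting the boundary condition $\partial_{\nu_\Omega} u_0 = f$ of \eqref{nonfrequencypart} with $f \ne 0$. Hence $\int_\Omega |v|^2 > 0$ for every $D \in \mathfrak D$. To make this uniform I would argue by compactness: the admissible profiles $\Upsilon$ form a bounded family in $C^{2,\varsigma}$ with $b_0 \le \Upsilon \le b_1 - \delta$, hence a relatively compact set in $C^2$, and the map $D \mapsto \int_\Omega |u_0 - \varrho|^2$ is continuous by continuous dependence of the mixed boundary value problem \eqref{nonfrequencypart} on the domain. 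A strictly positive continuous function on a compact set attains a positive minimum $C_0$, depending only on $f, \Omega, \mathfrak D$.

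The main obstacle is securing the boundary doubling inequality with constants \emph{uniform} over the class $\mathfrak D$: one must check that the flattening diffeomorphisms, the reflected coefficient matrix, and hence the frequency bound $K$ are all controlled purely by the geometric data $m, b_0, b_1, \delta$, rather than by the individual inclusion. The uniform lower bound $C_0$ and the uniform Dirichlet energy bound \eqref{first} are precisely what close this step, since they pin the Almgren frequency from below and above independently of $D$.
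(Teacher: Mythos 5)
Your proposal is correct in substance, but it takes a more self-contained route than the paper, whose entire proof of this proposition is a citation: the doubling inequalities are taken from Theorem~1.1 of \cite{AdolfssonEscauriaza} (unique continuation at the boundary for divergence-form operators on $C^{1,\alpha}$ domains), with the explicit dependence of $C$ and $K$ on the a priori data taken from Proposition~4.5 of \cite{ABRV}; the lower bound $\int_\Omega|u_0-\varrho|^2\geq C_0$ is not argued at all in the paper's proof. What you do instead is unpack the mechanism behind those references: the reduction to a boundary doubling estimate for the harmonic function $v=u_0-\varrho$ vanishing on $\partial D$, followed by flattening, odd reflection, and Garofalo--Lin \cite{GL} frequency monotonicity. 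For a $C^{2,\varsigma}$ boundary this is legitimate and is in fact simpler than what \cite{AdolfssonEscauriaza} needed for $C^{1,\alpha}$ domains (where naive reflection fails to produce Lipschitz coefficients); the one point you should make explicit is that the flattening must be done in conormal coordinates so that the reflected coefficient matrix remains Lipschitz with constants controlled by $m$, $b_0$, $b_1$, $\delta$ --- this is exactly what makes $K$ uniform over $\mathfrak D$, and you correctly identify it as the crux. For the lower bound your qualitative step ($v\not\equiv 0$ since $\partial_{\nu_\Omega}u_0=f\neq 0$) is right, but the compactness argument is the soft spot: it needs (i) passage to the $C^2$-closure of $\mathfrak D$ (the defining inequalities are strict, so closedness must be checked, though the limiting domains still give nontrivial solutions) and (ii) continuity of $D\mapsto u_0$ under $C^2$ domain convergence for the mixed problem \eqref{nonfrequencypart}, which is standard (Mosco convergence of the constrained spaces) but not free; note also that the resulting $C_0$ is then non-constructive, whereas the route in \cite{ABRV} obtains the analogous lower bound quantitatively from the normalization of the Neumann data, which is more in the spirit of the explicit constants the paper claims. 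Neither of these gaps is fatal, but both deserve a line of justification if the proof is to stand on its own.
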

\proof 
The   doubling inequalities  are   obtained  in 
\cite{AdolfssonEscauriaza}
for general elliptic operators in a divergence form 
(Theorem 1.1). 
In \cite{ABRV}, a more explicit evaluation of the constants 
$C$ and $K$ in
 terms of the  problem a priori data is derived
 (Proposition
 4.5). 
\endproof

Now, we are ready to prove Theorems  
\ref{mainthm1} and   \ref{mainthm2}.
We follow the ideas developed  in  
the proof of Theorem 2.2  in \cite{ABRV}. 
\proof
For $\hat x \in \mathbb S^{d-1} $, where $\mathbb S^{d-1}$ is
 the unit sphere,  we further denote by
\bea
\Upsilon_m(\hat x) = \min(\Upsilon(\hat x), \tilde 
\Upsilon(\hat x)),\;\;\;
\Upsilon_M(\hat x) = \max(\Upsilon(\hat x), \tilde \Upsilon(\hat x)).
\eea
A direct computation shows that $\Upsilon_m$ and $\Upsilon_M$
belong to $C^{0,1}( \mathbb S^{d-1} )$. \\

We introduce the domains   
\bea
D_m(\hat x) = \left\{\ba{llcc}
D &\textrm{if}&  \Upsilon_m(\hat x) = \Upsilon(\hat x),\\
\tilde D &\textrm{if}&  \Upsilon_m(\hat x) = \tilde \Upsilon(\hat x),
\ea 
\right.
\eea
and 
\bea
D_M(\hat x) = \left\{\ba{llcc}
D &\textrm{if}&  \Upsilon_M(\hat x) = \Upsilon(\hat x),\\
\tilde D &\textrm{if}&  \Upsilon_M(\hat x) = \tilde \Upsilon(\hat x).
\ea
 \right.
\eea

 Let $r_M: \mathbb S^{d-1} \rightarrow \mathbb R_+$
be defined by

\bea
r_M(\hat x) = \max\{r:  0 \leq r 
\leq \Upsilon_M(\hat x)- \Upsilon_m(\hat x) \textrm{  and  } 
B_r(\Upsilon_m(\hat x)\hat x)\setminus \overline{D_m(\hat x) }
\subset  D_M(\hat x)\setminus \overline{D_m(\hat x) }  \}.
\eea

Then $r_M(\hat x)$ attains its maximum
$r_0>0$ over $\mathbb S^{d-1}$ at $\hat x_0$,  that is 
\bea
r_0 := r_M(\hat x_0) = \max_{\hat x \in \mathbb S^{d-1}} r_M(\hat x). 
\eea

Now, let $\hat x_M \in \mathbb S^{d-1}$,
be such that 
\bea
d_0 :=  \Upsilon_M(\hat x_M) - \Upsilon_m(\hat x_M)
= \max_{\hat x \in \mathbb S^{d-1}}  
\left(\Upsilon_M(\hat x) - \Upsilon_m(\hat x)\right). 
\eea

Obviously, we have $r_0\leq d_0\leq 2m$. 

\begin{lemma}  \label{differencediameter}
There exist constants $c_{1}>0$ and  $c_2>1$,  such that  the following 
estimates
hold:
\bea
 |D\Delta \tilde D| \leq  c_1m^{d-1} d_0,\\
 \min\left(b_0 \sqrt{\frac{d_0}{m}}, d_0\right) \leq c_2 r_0.
\eea
The constants $c_i, \; i=1,2, $ only depend  on  the dimension
of the space. 
\end{lemma}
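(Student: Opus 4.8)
The plan is to prove the two inequalities separately, the first by an elementary change of variables and the second by exhibiting a single explicit inscribed ball near the direction of maximal radial gap. For the volume bound I would pass to spherical coordinates. Since $D\Delta\widetilde D=\{\rho\hat x:\Upsilon_m(\hat x)<\rho<\Upsilon_M(\hat x)\}$, one has
\[
|D\Delta\widetilde D|=\int_{\mathbb S^{d-1}}\int_{\Upsilon_m(\hat x)}^{\Upsilon_M(\hat x)}\rho^{d-1}\,d\rho\,d\sigma(\hat x)=\frac1d\int_{\mathbb S^{d-1}}\bigl(\Upsilon_M^{\,d}-\Upsilon_m^{\,d}\bigr)\,d\sigma(\hat x).
\]
Factoring $\Upsilon_M^{\,d}-\Upsilon_m^{\,d}=(\Upsilon_M-\Upsilon_m)\sum_{j=0}^{d-1}\Upsilon_M^{\,j}\Upsilon_m^{\,d-1-j}$ and using $\Upsilon_m\le\Upsilon_M\le m$ together with the pointwise bound $\Upsilon_M-\Upsilon_m\le d_0$ gives $\Upsilon_M^{\,d}-\Upsilon_m^{\,d}\le d\,m^{d-1}d_0$, hence $|D\Delta\widetilde D|\le|\mathbb S^{d-1}|\,m^{d-1}d_0$. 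This is the first estimate with $c_1=|\mathbb S^{d-1}|$, depending only on the dimension.

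For the second estimate it suffices to bound $r_M(\hat x_M)$ from below, since $r_0\ge r_M(\hat x_M)$. Set $h:=\Upsilon-\widetilde\Upsilon$, so that $\|h\|_{C^2(\mathbb S^{d-1})}\le 2m$, $\Upsilon_M-\Upsilon_m=|h|$ and $d_0=|h(\hat x_M)|$; I may assume $d_0>0$ (otherwise $D=\widetilde D$ and there is nothing to prove) and, without loss of generality, $h(\hat x_M)=d_0$. Because $\hat x_M$ is an interior maximum of $h$ on the closed manifold $\mathbb S^{d-1}$, the tangential gradient $\nabla_{\mathbb S}h(\hat x_M)$ vanishes, and Taylor's formula yields $h(\hat x)\ge d_0-m\,\theta^2$, where $\theta$ is the geodesic distance to $\hat x_M$. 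Consequently the radial gap stays above $d_0/2$ on the cap $\theta\le\theta_0:=\sqrt{d_0/(2m)}$, on which $\Upsilon>\widetilde\Upsilon$ so that $D_m=\widetilde D$ and $D_M=D$ consistently. Since $\Upsilon_m\ge b_0$, this angular cap corresponds to a lateral neighbourhood of the boundary point $p:=\Upsilon_m(\hat x_M)\hat x_M\in\partial D_m$ of width of order $b_0\theta_0\sim b_0\sqrt{d_0/m}$.

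It remains to certify that a ball of radius comparable to $\min(d_0,b_0\sqrt{d_0/m})$ fits inside $D_M\setminus\overline{D_m}$. Working in the local frame at $p$ in which $\partial D_m$ is a graph over its tangent plane, the bound $\|\Upsilon\|_{C^{2,\varsigma}}\le m$ together with $\Upsilon\ge b_0$ controls the principal curvatures of $\partial D_m$ and $\partial D_M$ by a multiple of $m/b_0^2$, while the vanishing of $\nabla_{\mathbb S}h(\hat x_M)$ makes the two surfaces parallel to first order at $p$. Thus over the lateral neighbourhood above, the region $D_M\setminus\overline{D_m}$ contains a slab whose bounding surfaces bend inward only at the scale $\sqrt{d_0/(m/b_0^2)}=b_0\sqrt{d_0/m}$, and fitting the largest ball centred on $\partial D_m$ into this lens-shaped region gives a radius bounded below by a constant times $\min(d_0,b_0\sqrt{d_0/m})$; this is the claim, with a constant $c_2$ depending only on the dimension and $\mathfrak D$.

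The main obstacle is precisely this last packing step: one must balance the ball's curvature $1/r$ against the surface curvatures $\sim m/b_0^2$ and the quadratic closing of the gap, and — most delicately — ensure that the \emph{radial} gap $d_0$ translates into a \emph{normal} clearance of comparable order, which is where the strict star-shapedness (the uniform lower bound on $x\cdot\nu_D$, i.e. the constant $r_D>0$) must be invoked to keep the radial and normal directions uniformly transverse. I would carry out this geometric construction in detail following the argument in Theorem~2.2 of \cite{ABRV}, which handles exactly this type of two-surface fitting for domains of the class $\mathfrak D$.
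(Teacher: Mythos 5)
Your proof is correct and follows essentially the same route as the paper: the volume bound is the immediate polar--coordinate computation implicit in ``follows from the definition of $d_0$'', and the lower bound on $r_0$ comes, exactly as in the paper, from the vanishing of the tangential gradient of $\Upsilon-\tilde\Upsilon$ at the maximizing direction $\hat x_M$, which keeps the radial gap above $d_0/2$ on a spherical cap of angular radius of order $\sqrt{d_0/m}$, into which a ball centred at $\Upsilon_m(\hat x_M)\hat x_M$ of radius of order $\min\left(b_0\sqrt{d_0/m},\,d_0\right)$ is inscribed and compared with $r_M(\hat x_M)\le r_0$. The only minor discrepancy is that your final constant $c_2$ is allowed to depend on $\mathfrak D$, whereas the lemma asserts dependence on the dimension only (the paper's one-line ``forward calculation'' claims the explicit value $s_0=\frac14\min\left(b_0\sqrt{d_0/m},d_0\right)$); your more detailed packing discussion makes explicit what that calculation suppresses.
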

\proof

Without any loss of generality, we assume that 
$\Upsilon_M(\hat x_M) =  \tilde \Upsilon(\hat x_M)$,
 $\Upsilon_m(\hat x_M) =  \Upsilon(\hat x_M)$, and 
  denote by $x_M =  \Upsilon(\hat x_M)\hat x_M
 \in \partial D$.\\

The first 
inequality immediately follows from the definition
 of $d_0$. \\
 
Now, from the $C^2$ regularity of the function
$\hat x \rightarrow \Upsilon(\hat x) - \tilde \Upsilon(\hat x)$, we 
deduce that for $t_0 = \frac{1}{2\sqrt{m}}$,
we have   $ d_0/2 \leq \Upsilon_M(\hat x) - \Upsilon_m(\hat x)\leq d_0$
for all $\hat x \in B_{t_0 \sqrt{d_0}}(\hat x_M) \cap \mathbb S^{d-1}$.\\

A forward calculation shows that for $ s_0= \frac{1}{4}\min\left(b_0 \sqrt{\frac{d_0}{m}}, d_0\right) $,
we have $$B_{s_0}(\Upsilon(\hat x_M)\hat x_M)\setminus 
\overline D \subset \tilde D \setminus 
\overline D.$$

 From  the definition of $r_0$, we  obtain

 $$ s_0
\leq r_0,$$  which finishes the proof of the lemma.

\endproof 

Without any loss of generality, we assume that 
$\Upsilon_M(\hat x_0) =  \tilde \Upsilon(\hat x_0)$,
 $\Upsilon_m(\hat x_0) =  \Upsilon(\hat x_0)$, and 
  denote by $x_0 =  \Upsilon(\hat x_0)\hat x_0
 \in \partial D$.\\

Now, let $\tilde r_0 >0$ such  that $B_{\tilde r_0}(x_0)\cap \Omega=
\Omega$.
 Then, we immediately have  $\tilde r_0> r_0$.  Obviously, $\tilde r_0$ 
only depends on $\mathfrak D$ and $\Omega$.\\

Taking $R= \tilde r_0$ and $r= r_0$ in Lemma \ref{thirdCauchy}, we 
obtain 

\bea
\int_{B_{\tilde r_0}(x_0)\cap \Omega} |u_0(x)- \varrho|^2 dx 
 \leq C\left(\frac{\tilde r_0}{r_0}\right)^K
 \int_{B_{r_0}(x_0)\cap \Omega} |u_0(x)- \varrho|^2 dx,
 \eea
which implies that
\bea
r_0^K \int_{\Omega} |u_0(x)- \varrho|^2 dx 
 \leq C \tilde r_0^K
 \int_{B_{r_0}(x_0)\cap \Omega} |u_0(x)- \varrho|^2 dx.
 \eea
Since $u_0(x)- \varrho$ vanishes inside $D$, and
$B_{r_0}(x_0) \setminus \overline D \subset \tilde D\setminus
  \overline D$,
we have 
\bea
r_0^K \int_{\Omega} |u_0(x)- \varrho|^2 dx 
 \leq C \tilde r_0^K
 \int_{B_{r_0}(x_0)\cap \Omega} |u_0(x)- \varrho|^2 dx,\\
  \leq C \tilde r_0^K
 \int_{\tilde D\setminus \overline D} |u_0(x)- \varrho|^2 dx. 
 \eea
From Lemma \ref{thirdCauchy}, we deduce that 
\bea
r_0  \left(\int_{\Omega} |u_0(x)- \varrho|^2 dx \right)^{\frac{1}{K}}
  \leq C  \left\| u_0 - \tilde u_0
\right\|_{C^0 \left(\partial\left(D\cup \widetilde D\right)
\right)}^{\frac{2}{K}}. 
 \eea
Now, combining estimates of Lemmas \ref{firstCauchy}, \ref{differencediameter},
 and  Proposition \ref{doublinginequality},
we finally obtain  the results of the main theorems.  

\endproof

\section{Acknowledgments}
This work has been partially supported by the 
LabEx PERSYVAL-Lab (ANR-11-LABX- 0025-01).



\begin{thebibliography}{CS2}
 \frenchspacing

\bibitem[Ad]{Ad} {\sc R.A. Adams and J.F.  Fournier}.{\em Sobolev Spaces} 
(Second ed.). Academic Press, 2003. 

\bibitem[AE]{AdolfssonEscauriaza}{\sc V. Adolfsson and
 A L. Escauriaza}. {\em $C^{1, \alpha}$ domains 
and unique continuation at the boundary.} Comm. 
Pure Appl. Math., L (1997), 935--969.

\bibitem[AAJS]{alberti} {\sc G.S. Alberti, H. Ammari, B. Jing, and J.K. Seo}. {\em The linearized inverse problem in multifrequency electrical impedance tomography}. SIAM J. Imag. Sci., to appear.  

\bibitem[ABRV]{ABRV} {\sc G. Alessandrini, E. Beretta, E. Rosset, and S.
    Vessella.}
 {\em Optimal stability for inverse elliptic boundary value
problems with unknown boundaries.} Annali della Scuola Normale
Superiore di Pisa-Classe di Scienze  29 (2000), 755--806.


 \bibitem[ABTV]{ABTV} {\sc H. Ammari, E. Bonnetier, F. Triki, and M. Vogelius.} {\em
    Elliptic estimates in composite media with smooth inclusions: an
    integral equation approach}.
Annales scientifiques de l'ENS 48 (2015), 453--495. 

\bibitem[ACLZ]{zou} {\sc H. Ammari, Y.T. Chow, K. Liu, and J. Zou.} {\em Optimal shape design by partial spectral data.} 
SIAM J. Sci. Comput., 37 (2015), B855-B883. 

\bibitem[ADM]{pierre} 
{\sc  H. Ammari, Y. Deng, and P. Millien}.
{\em Surface plasmon resonance of nanoparticles and applications in imaging}. Arch. Ration. Mech. Anal.,  220 (2016),  109--153.


\bibitem[AGGJS]{AGGJS} {\sc H. Ammari, J.~Garnier, L. Giovangigli, W. Jing, and J.K. Seo}. {\em Spectroscopic imaging  of a dilute cell suspension}. {J. Math. Pures Appl.} 105 (2016), 603--661. 

\bibitem[AK]{AK} {\sc H. Ammari and H. Kang. } {\em 
Reconstruction of small inhomogeneities from boundary 
measurements Lecture Notes in Mathematics,} Vol. 1846, 
Springer-Verlag, Berlin, 2004.

\bibitem[AS]{AS} {\sc
H. Ammari and J.K. Seo.}{ \em An accurate formula for the
reconstruction of 
conductivity inhomogeneities. } Adv. Appl. Math., 30 (2003), 679--705.


\bibitem[AnK]{ando} {\sc K. Ando and H. Kang.} {\em Analysis of plasmon resonance on smooth domains using spectral properties of the Neumann-Poincar\'e operator}. J. Math. Anal. Appl., 435 (2016), 162--178.

\bibitem[AKL]{ando2} {\sc K. Ando, H. Kang, and H. Liu.} {\em Plasmon resonance with finite frequencies: a validation of the quasi-static approximation for diametrically small inclusions}. SIAM J. Appl. Math., 76 (2016), 731--749.

\bibitem[BT]{BT} {\sc E. Bonnetier and F. Triki.} {\em  On the spectrum of the 
Poincar\'e variational problem for two close-to-touching 
inclusions in 2D.} Arch. Ration. Mech. Anal. 209 (2013), 541--567.  

\bibitem[CHY]{CHY} {\sc J. Cheng, Y. C. Hon, and
    M. Yamamoto.} {\em Conditional stability estimation for an 
inverse boundary problem with non-smooth boundary in $\mathcal R^3$.}
Trans. Amer. Math. Soc. 353 (2001),
 4123--4138.


\bibitem[BCY1]{BCY1} {\sc  A. L. Bukhgeim, J. Cheng, and M. Yamamoto.}
{\em Stability for an inverse boundary problem of determining a 
part of boundary, Inverse Problems,}
14  (1999), 1021--1032.



 \bibitem[BCY2]{BCY2}{\sc A. L. Bukhgeim, J. Cheng, and
    M. Yamamoto.} {\em On a sharp estimate in a non-destructive 
testing: determination of unknown boundaries.} Applied 
Electromagnetism and Mechanics. K. Miya, M. Yamamoto 
and Nguyen Xuan Hung eds. JSAEM 64 (1998): 75.

 \bibitem[BCY3]{BCY3}{\sc A. L. Bukhgeim, J. Cheng, and
M. Yamamoto.} {\em Conditional stability in an inverse problem of
determining a non-smooth boundary. } J. Math. Anal. Appl. 242 (2000), 57--74.



\bibitem[BV]{BV} {\sc E. Beretta and S. Vessella.} 
{\em Stable determination of boundaries from Cauchy data,} SIAM
J. Math. Anal. 30 (1999), 220--232.

\bibitem[GL]{GL} {\sc N. Garofalo and F.-H. Lin.
}{\em  Monotonicity properties of variational integrals. 
$A_p$ weights and unique continuation,}
Indiana Univ. Math. J. 35 (1986), 245--268.


\bibitem[GPG]{GPG} {\sc C. Gabriel, A. Peyman, and E.H. Grant}.  {\em Electrical conductivity of tissue at frequencies below 1MHz}. Phys. Med. Biol. 54 (2009), 4863--4878.

\bibitem[HL]{HL} {\sc A. Han and F.-H. Lin.
}{\em Nodal sets of solutions of elliptic differential equations.}
In preparation 2013,  Book available on Han's homepage.

\bibitem[Is1]{Is1}{\sc V. Isakov.}
 {\em Stability estimates for obstacles in
 inverse scattering,} J. Comput. Appl. Math.
42, (1992), 79--88.
\bibitem[Is2]{Is2}{\sc V. Isakov.}{\em New stability 
results for soft obstacles in inverse scattering,} Inverse Problems, 9
(1993), 535--543.
\bibitem[Is3]{Is3}{\sc V. Isakov. }  {\em 
Inverse source problems.} No. 34. American Mathematical Soc., 1990.

\bibitem[JS]{JS} {\sc J. Jang and J.K. Seo}. {\em Detection of admittivity anomaly on high-contrast heterogeneous backgrounds using frequency difference EIT}.
Phys. Meas.  36 (2015), 1179--1192.

\bibitem[KKL]{KKL} {\sc H. Kang, K.  Kim and H. Lee}.
{\em Spectral properties of the Neumann Poincar\'e operator and uniformity 
of estimates for the conductivity equation with complex coefficients}.
J. London Math. Soc. 93 (2016), 519--545. 

\bibitem[MSHA]{malone}  {\sc E. Malone, G. Sato dos Santos, D. Holder, and S. Arridge}. {\em Multifrequency electrical impedance tomography using spectral constraints}.
IEEE Trans. Med. Imag. 33 (2014), 340--350.

\bibitem[McL]{McL}
{\sc W. McLean}. {\it Strongly elliptic systems and boundary integral equations}, 
Cambridge University Press, 2000.
 
\bibitem[Pa]{Pa}
{\sc L.E.  Payne}.  {\it Bounds  in  the  Cauchy  problem  for  
Laplace's  equation. }  Arch.  Rational  Mech.
Anal. V.5 (1960), 35-45.  MR 22: 1743


\bibitem[Rd1]{Rd1} {\sc L. Rondi.} {\em Optimal stability estimates for the
  determination of defects by electrostatic measurements.}
 Inverse Problems V. 15 (1999), 1193--1212.

\bibitem[Rd2]{Rd2} {\sc L. Rondi.} {\em Uniqueness and stability for
    the determination of boundary defects by electrostatic
    measurements. }
Proceedings of the Royal Society of Edinburgh: 
Section A Mathematics, 130(05), 1119--1151 (2000).

\bibitem[Tr]{Tr}  {\sc G. N.  Trytten.} {\em 
 Pointwise bounds for solutions of the Cauchy problem for 
elliptic equations.}  Archive for Rational Mechanics and Analysis, 
(1963), vol. 13, no 1, p. 222-244.

\end{thebibliography}
\end{document}